\def\bfm#1{\boldsymbol{#1}} 
\def\RR{\mathbb{R}}
\newtheorem{theorem}{Theorem}[section]
\newtheorem{lemma}[theorem]{Lemma}
\newtheorem{corollary}[theorem]{Corollary}
\newproof{proof}{Proof}
\journal{Computer Aided Geometric Design}
\begin{document}

\begin{frontmatter}

\title{Optimal approximation of spherical squares by tensor product quadratic B\'{e}zier patches}

\author[address1,address2]{Ale\v{s} Vavpeti\v{c}}
\ead{ales.vavpetic@fmf.uni-lj.si}

\author[address1,address2]{Emil \v{Z}agar\corref{corauth}}
\ead{emil.zagar@fmf.uni-lj.si}
\cortext[corauth]{Corresponding author}

\address[address1]{Faculty of Mathematics and Physics, University of Ljubljana,
Jadranska 19, Ljubljana, Slovenia}
\address[address2]{Institute of Mathematics, Physics and Mechanics, Jadranska 19, Ljubljana, Slovenia}

\begin{abstract}
  In \cite{Eisele-1994-best-biquadratic}, the author considered
  the problem of the optimal approximation of symmetric surfaces by biquadratic B\'ezier
  patches. Unfortunately, the results therein
  are incorrect, which is shown in this paper by considering the optimal approximation
  of spherical squares. A detailed analysis and a numerical algorithm are given, providing the best
  approximant according to the (simplified) radial error,
  which differs from the one obtained in \cite{Eisele-1994-best-biquadratic}.
  The sphere is then approximated by the continuous spline of two and six tensor product quadratic
  B\'ezier patches. It is further shown that the $G^1$  smooth
  spline of six patches approximating the sphere exists, but
  it is not a good approximation.
  The problem of an approximation of spherical rectangles is
  also addressed and  numerical examples indicate
  that several optimal approximants might exist in some cases,
  making the problem extremely difficult to handle. Finally,
  numerical examples are provided that confirm theoretical results.
\end{abstract}

\begin{keyword}
 B\'ezier patch \sep spherical square \sep optimal approximation
 \sep sphere approximation
\MSC[2020] 65D05 \sep  65D07 \sep 65D15 \sep 65D17
\end{keyword}

\end{frontmatter}


\section{Introduction}\label{sec:introduction}

One of the most important issues of computer-aided geometric design (CAGD)
is the approximation of curves and surfaces by simple polynomial-based objects.
It is well known that even fundamental geometric objects such as circular arcs
or spherical patches do not possess exact polynomial representations.
It is thus an interesting and practical issue to find their best polynomial approximants.
Much work has been done on the optimal polynomial approximation
of circular arcs (see, e.g., \cite{Vavpetic-2020-optimal-circular-arcs} and the references
therein). However, much less is known about the optimal approximation of spherical patches.
The main two classes of the polynomial (or spline) surfaces used in CAGD are
triangular patches and tensor product patches.
The optimal approximation of equilateral spherical triangles by triangular
B\'ezier patches was studied recently in \cite{Vavpetic-Zagar-2022-spherical-triangles}.
Approximation of rational tensor-product biquadratic B\'ezier surfaces
(which also includes the sphere) by polynomial tensor product patches
was proposed in \cite{Floater-97-CAGD-Hermite}.
An exact sphere representation by rational S-patches was considered recently in
\cite{Groselj-Sadl-2022-exact-rational-sphere}.
In \cite{Eisele-1994-best-biquadratic},
the author studied the problem of the best approximation
of symmetric surfaces by biquadratic B\'ezier surfaces.
However, we show in this paper that the obtained
results are incorrect since the constructed approximants are not optimal even
in the spherical case. Our primary purpose is to characterize the optimal polynomial approximant
and provide an algorithm for its construction.

The paper is organized as follows. After the introduction in \Cref{sec:introduction}
some basic preliminaries are explained in \Cref{sec:preliminaries}.
The detailed analysis of the construction of the best approximant together with
a numerical algorithm are given in \Cref{sec:main_result}.
The next section considers $G^1$ smooth optimal approximants of the sphere. In \Cref{sec:rectangle} some basic observations
of the approximation of spherical rectangles are provided.
Numerical examples are shown in \Cref{sec:numerical_examples},
and some closing remarks are given in the last section.

\section{Preliminaries}\label{sec:preliminaries}

Let ${\mathcal S}$ be the unit  sphere in $\RR^3$ centered at $\bfm{0}=(0,0,0)$
and let ${\mathcal R}$ be the spherical square for which the projection of its vertices
along the vector $[0,0,1]^T$ are vertices of the square centered at $(0,0)$
with its side equal to $2a$,
where $0<a\leq\tfrac{\sqrt{2}}{2}$. Let ${\mathcal P}$ be a tensor product quadratic B\'ezier patch parameterized as
\begin{equation}\label{eq:p}
    \bfm{p}(u,v)=\sum_{i=0}^2\sum_{j=0}^2 B_i^2(u)B_j^2(v)\bfm{b}_{ij},\quad u,v\in[-1,1],
\end{equation}
where
\begin{equation*}
    B_i^2(u)=\sum_{i=0}^2\binom{2}{i}\left(\frac{1+u}{2}\right)^i\left(\frac{1-u}{2}\right)^{2-i},\quad i=0,1,2,
\end{equation*}
are quadratic Bernstein polynomials parametrized on $[-1,1]$ and $\bfm{b}_{ij}\in\RR^3$, $i,j=0,1,2$, are the corresponding
control points. Our goal is to find the  optimal approximation of ${\mathcal R}$ by ${\mathcal P}$ according
to the radial error
\begin{equation*}
    \max_{(u,v)\in[-1,1]^2}\left|\|\bfm{p}(u,v)\|_2-1\right|
\end{equation*}
or according to the simplified radial error
\begin{equation}\label{eq:simplified_radial}
    \max_{(u,v)\in[-1,1]^2}\left|f(u,v)\right|, \quad f(u,v)=\|\bfm{p}(u,v)\|_2^2-1.
\end{equation}
Note that the latter one is easier to handle since $f$ is a polynomial function of the coefficients of $\bfm{p}$.
Due to this reason, we will consider this error in the following. However, once the problem is solved for the simplified radial error, an identical approach with $f$ replaced by $g=\sqrt{f+1}-1$
can be used to find the best approximant according to the radial error.

In order to get an appropriate approximation of ${\mathcal R}$ it is natural to require that ${\mathcal P}$
interpolates ${\mathcal R}$ at least at its four vertices and that the control points of the boundary curves of ${\mathcal P}$
lie in the planes passing through corresponding two endpoints and the origin $\bfm{0}$. Consequently, by symmetry we have
\begin{align}
    \bfm{b}_{00}&=(-a,-a,\sqrt{1-2a^2}),\quad  \bfm{b}_{20}=(a,-a,\sqrt{1-2a^2}),\quad
    \bfm{b}_{02}=(-a,a,\sqrt{1-2a^2}), \quad  \bfm{b}_{22}=(a,a,\sqrt{1-2a^2}),\nonumber\\
    \bfm{b}_{10}&=\alpha(\bfm{b}_{00}+\bfm{b}_{20}),\quad  \bfm{b}_{01}=\alpha(\bfm{b}_{00}+\bfm{b}_{02}),\quad
    \bfm{b}_{21}=\alpha(\bfm{b}_{20}+\bfm{b}_{22}), \quad  \bfm{b}_{12}=\alpha(\bfm{b}_{02}+\bfm{b}_{22}),
    \label{eq:b_square}
\end{align}
and $\bfm{b}_{11}=\beta(0,0,1)$, for some unknown real parameters $\alpha,\beta$.
It is quite clear that $\alpha,\beta>0$.
Thus $\bfm{p}=\bfm{p}(\cdot,\cdot,\alpha,\beta)$ and consequently
$f=f(\cdot,\cdot,\alpha,\beta)$ which implies that
we are looking for a minimum of \eqref{eq:simplified_radial}, i.e.,
\begin{equation*}
    \min_{\alpha,\beta>0}\max_{(u,v)\in[-1,1]^2}\left|f(u,v,\alpha,\beta)\right|.
\end{equation*}
Let $\angle=\{(u,v)\in[-1,1]^2;u=v\text{ or }v=-1\}$ and define
\begin{equation*}
  f_s(u,\alpha)=f(u,-1,\alpha,\beta),\quad
  f_d(u,\alpha,\beta)=f(u,u,\alpha,\beta), \quad
  {\rm and\ } f_{s,d}=f|_{\angle}.
\end{equation*}
We shall first find parameters $\alpha^*$ and $\beta^*$ which minimize the maximum of
$|f|$ over
$\angle$ and finally show that they provide global minimization of $|f|$ over $[-1,1]^2$, i.e.,
\begin{equation*}
    (\alpha^*,\beta^*)={\rm{argmin}}_{\alpha,\beta>0}
    \max_{(u,v)\in[-1,1]^2}|f(u,v,\alpha,\beta)|.
\end{equation*}
Note that $f_s(\cdot,\alpha)$ and
$f_d(\cdot,\alpha,\beta)$ are even functions on $[-1,1]$ and it is thus enough to analyse them only on $[0,1]$ if it is more convenient.

The author in \cite{Eisele-1994-best-biquadratic}
claims that for the best approximant
$\bfm{p}$ the error
$f_s$ equioscillates. Although this looks reasonable, it is not true
(neither for $f$ nor for $g$) and we shall show
in the following that one can find a better approximant which does not possess this property
(see \Cref{fig:Eisele_vs_ours_side_diagonal}).

\begin{figure}[htb]
\centering
\includegraphics[width=1\textwidth]{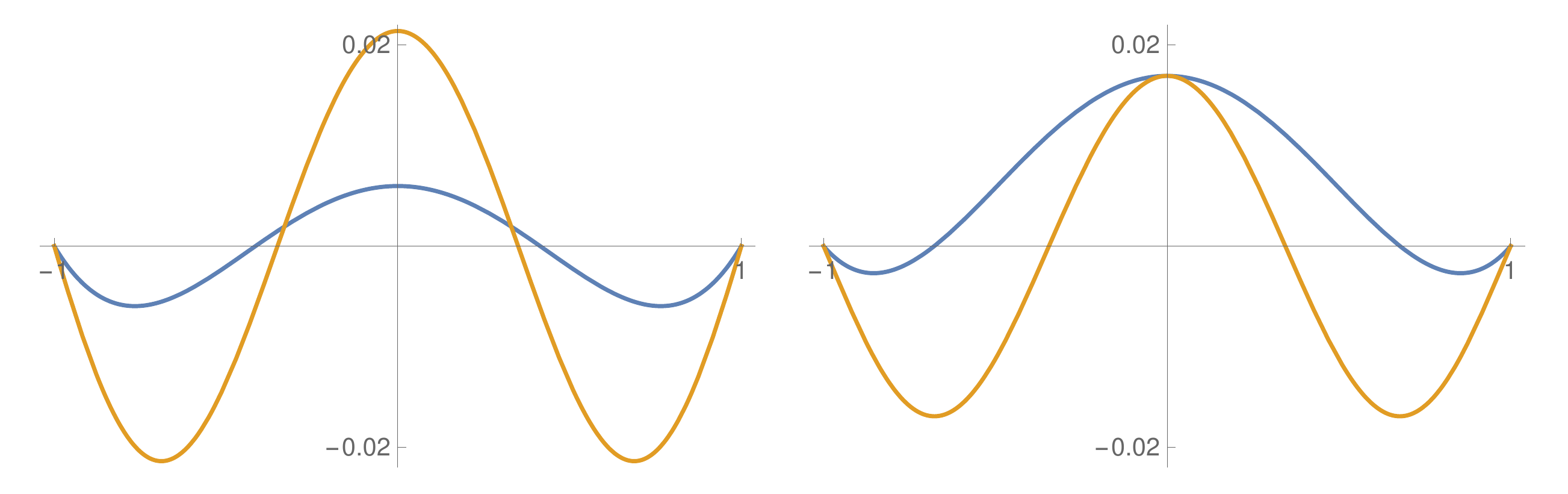}
\caption{Graphs of $f_s$ (blue) and $f_d$ (orange)
for ``the optimal" approximant from \cite{Eisele-1994-best-biquadratic} (left)
and for the optimal approximant constructed in this paper (right).
In all cases $a=1/\sqrt{3}$.
It is clearly seen that
the one on the right has a smaller error on $\angle$.}
\label{fig:Eisele_vs_ours_side_diagonal}
\end{figure}

\section{Main result}\label{sec:main_result}
To prove that the optimal approximant $f_s$ can not equioscillate,
let us start with the following important observation.
\begin{lemma}\label{lem:monotonicity_fs_fd}
  For every $u\in[0,1)$ and $\alpha,\beta>0$ functions $f_s(u,\cdot)$, $f_d(u,\cdot,\beta)$,
  and $f_d(u,\alpha,\cdot)$
  are strictly increasing.
\end{lemma}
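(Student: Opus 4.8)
The plan is to restrict $f$ to the two edges making up $\angle$, compute the squared coordinates of $\bfm{p}$ in closed form, and then simply read off the signs of the three partial derivatives. Write $c=\sqrt{1-2a^2}$ and abbreviate the univariate weights $w_i=B_i^2(u)$; the identities I would use repeatedly are
\[
  w_0+w_2=\tfrac{1+u^2}{2}=:P,\qquad w_1=\tfrac{1-u^2}{2}=:Q,\qquad w_2-w_0=u,
\]
so that $P>0$ for all $u$ while $Q>0$ for $u\in[0,1)$, and moreover $a^2+c^2=1-a^2>0$.

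For $f_s$ I would first note that on the side $v=-1$ only the row $j=0$ survives, since $B_0^2(-1)=1$ and $B_1^2(-1)=B_2^2(-1)=0$; in particular $f_s$ does not depend on $\beta$, as the lemma's notation already suggests. Substituting the control points from \eqref{eq:b_square} gives $\bfm{p}(u,-1)=\big(au,\,-a(P+2\alpha Q),\,c(P+2\alpha Q)\big)$, whence
\[
  f_s(u,\alpha)=a^2u^2+(1-a^2)(P+2\alpha Q)^2-1,\qquad
  \partial_\alpha f_s=4(1-a^2)\,Q\,(P+2\alpha Q).
\]
Since $1-a^2>0$, $Q>0$, and $P+2\alpha Q>0$ on $[0,1)$, this derivative is strictly positive, giving the first claim.

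For $f_d$ I would exploit the reflection symmetry across the plane $x=y$: on the diagonal the first two coordinates of $\bfm{p}(u,u)$ coincide, and a short computation yields $x=y=a\,u\,(P+2\alpha Q)$ together with $z=cP^2+4c\alpha PQ+\beta Q^2$, so that $f_d=2a^2u^2(P+2\alpha Q)^2+z^2-1$. Differentiating,
\[
  \partial_\alpha f_d=8Q\big[a^2u^2(P+2\alpha Q)+cPz\big],\qquad
  \partial_\beta f_d=2Q^2z.
\]
The only point requiring an argument rather than inspection is the sign of $z$: because $c\ge0$, $\alpha,\beta>0$, and $P,Q\ge0$, the quantity $z$ is a sum of nonnegative terms and is in fact strictly positive on $[0,1)$ (the term $\beta Q^2$ alone forces this). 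With $z>0$ both bracketed expressions are strictly positive, completing the proof. The computation is essentially bookkeeping; the only genuine ingredients are the positivity of $z$ and of $1-a^2$, and I would flag that strictness of the $\alpha$-monotonicity of $f_d$ at the single point $u=0$ relies on $c>0$, i.e.\ on $a<\tfrac{\sqrt2}{2}$.
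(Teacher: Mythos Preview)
Your approach is essentially the same as the paper's: compute the three partial derivatives and verify each is strictly positive on $[0,1)$. The paper simply displays the fully expanded polynomial expressions in $u$ and leaves the positivity to inspection, whereas your parametrization via $P,Q,c,z$ keeps the structure visible and makes the sign analysis cleaner; expanding your formulas recovers exactly the paper's expressions (e.g.\ $4(1-a^2)Q(P+2\alpha Q)=(1-a^2)(1-u^2)(1+u^2+2\alpha(1-u^2))$). Your flag about the endpoint $a=\tfrac{\sqrt2}{2}$, $u=0$ is correct and is a genuine edge case the paper's proof does not comment on: at that point $\partial_\alpha f_d$ vanishes identically in $\alpha$, so strict monotonicity of $f_d(0,\cdot,\beta)$ fails there.
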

\begin{proof}
  The result of the lemma follows directly from the observations
  \begin{align*}
    \frac{\partial f_s}{\partial\alpha}(u,\alpha)
    &=\left(1-a^2\right) \left(1-u^2\right) \left(1+u^2+2 \alpha  \left(1-u^2\right)\right),\\
    \frac{\partial f_d}{\partial\alpha}(u,\alpha,\beta)
    &=\frac{1}{2} \sqrt{1-2 a^2} \beta  \left(1-u^4\right) \left(1-u^2\right)^2+2 \alpha  \left(1-2
   a^2\right) \left(1-u^4\right)^2\\
   &+4 a^2 \alpha  u^2 \left(1-u^2\right)^2+\frac{1}{2} \left(1-2
   a^2\right) \left(1-u^8\right)+u^2 \left(1-u^4\right),\\
   \frac{\partial f_d}{\partial\beta}(u,\alpha,\beta)
    &=\frac{1}{8} \left(1-u^2\right)^2 \left(\beta  \left(1-u^2\right)^2+\sqrt{1-2 a^2}
   \left(1+u^2\right)^2+4 \sqrt{1-2 a^2} \alpha  \left(1-u^4\right)\right).
  \end{align*}
  \qed
\end{proof}
Quite clearly this lemma implies that $f_{s,d}(\cdot,\alpha^*,\beta^*)$ must equioscillate.
Moreover, we will show that  $f_d(\cdot,\alpha^*,\beta^*)$ equioscillates and
$f_s$ does not, which contradict the results in \cite{Eisele-1994-best-biquadratic}.
Let us prove a particular relation between $f_d$ and $f_s$ first.

\begin{lemma} \label{lem:f_s>f_d}
Let $\alpha\in \left[\tfrac 1 2,\tfrac{1}{2-2a^2}\right]$ and let
$f_d(0,\alpha,\beta)\leq f_s(0,\alpha)$. Then
$f_d(u,\alpha,\beta)<f_s(u,\alpha)$ for all $u\in(0,1)$.
\end{lemma}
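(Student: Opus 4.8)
The plan is to turn everything into explicit univariate polynomials and then into a sign analysis. First I would evaluate $f_s$ and $f_d$ in closed form by substituting the control points \eqref{eq:b_square} into \eqref{eq:p}. On the side $v=-1$ only the bottom row $\bfm{b}_{i0}$ survives and the patch collapses to the single quadratic curve $\bfm{p}(u,-1)=(a u,\,-a w,\,\sqrt{1-2a^2}\,w)$, where $w=w(u,\alpha)=\tfrac{1+u^2}{2}+\alpha(1-u^2)$, so that $f_s(u,\alpha)=a^2u^2+(1-a^2)w^2-1$ (this is consistent with the derivative recorded in \Cref{lem:monotonicity_fs_fd}). Along the diagonal one gets $\bfm{p}(u,u)=(a u w,\,a u w,\,z_d)$ with $z_d=\sqrt{1-2a^2}\,w^2+(\beta-4\sqrt{1-2a^2}\,\alpha^2)Q^2$ and $Q=\tfrac{1-u^2}{2}$, hence $f_d(u,\alpha,\beta)=2a^2u^2w^2+z_d^2-1$. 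The crucial structural facts are that both errors depend on $u$ only through $t=u^2\in[0,1]$, that $f_s$ is quadratic and $f_d$ quartic in $t$, and that $f_s=f_d=0$ at $t=1$ because the corner lies on $\mathcal S$.

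Second, I would use \Cref{lem:monotonicity_fs_fd} to eliminate $\beta$. Since $f_d(u,\alpha,\cdot)$ is strictly increasing for $u\in[0,1)$, enlarging $\beta$ only decreases $f_s-f_d$; thus the worst case under the hypothesis $f_d(0,\alpha,\beta)\le f_s(0,\alpha)$ is the largest admissible value, namely the $\hat\beta=\hat\beta(\alpha)$ for which $f_d(0,\alpha,\hat\beta)=f_s(0,\alpha)$. For any $\beta\le\hat\beta$ and $u\in(0,1)$ one then has $f_d(u,\alpha,\beta)\le f_d(u,\alpha,\hat\beta)$, so it suffices to prove the single strict inequality $D(u):=f_s(u,\alpha)-f_d(u,\alpha,\hat\beta)>0$ on $(0,1)$; the stated claim follows for every $\beta\le\hat\beta$. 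By construction $D$ now vanishes at both $t=0$ and $t=1$.

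Third comes the sign analysis, which I expect to be the main obstacle. Written as a polynomial in $t$, $D$ is quartic with roots at $t=0$ and $t=1$, so $D(t)=t(1-t)R(t)$ with $R$ quadratic, and the claim is equivalent to $R>0$ on $(0,1)$. A short computation shows the leading coefficient of $R$ equals $\tfrac1{16}\bigl(\sqrt{1-2a^2}\,(1-2\alpha)^2+\hat\beta-4\sqrt{1-2a^2}\,\alpha^2\bigr)^2\ge 0$, so $R$ is an upward-opening parabola, while the endpoint values are $R(0)=D'(0)$ and $R(1)=a^2+(1-2\alpha)(1-a^2)$. The admissible range $\alpha\in[\tfrac12,\tfrac1{2-2a^2}]$ forces $1-2\alpha\in[\tfrac{-a^2}{1-a^2},0]$, whence $R(1)\in[0,a^2]$, and one checks that $R(0)\ge0$ as well. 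The delicate point is that nonnegative endpoint values of an upward parabola do not by themselves preclude a dip below zero in between, and it is exactly here that the constraints $0<a\le\tfrac{\sqrt2}{2}$ and $\alpha\in[\tfrac12,\tfrac1{2-2a^2}]$ must be used decisively. I would close the gap by locating the vertex of $R$ outside $(0,1)$: numerically $R$ is monotonically decreasing on $[0,1]$, so I expect to prove $R'(1)\le0$, which via the Taylor expansion of $D$ at $t=1$ is equivalent to $D''(1)\ge 2D'(1)$, yielding $R(t)\ge R(1)\ge0$ with strict inequality on the open interval. Establishing this vertex/monotonicity bound uniformly over the two-parameter region in $(a,\alpha)$ is the only genuinely laborious step.
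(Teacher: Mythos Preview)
Your reduction via monotonicity in $\beta$ to the boundary case $\beta=\hat\beta(\alpha)$ is exactly what the paper does, and your factorisation $D(\tau)=\tau(1-\tau)R(\tau)$ with $\tau=u^2$ is also implicit there (the paper's displayed formula for $h$ visibly carries a factor $u^2(1-u^2)$). Where you diverge is in the positivity argument for $R$. You fix $\alpha$ and analyse $R$ as a quadratic in $\tau$, aiming to show that its vertex lies to the right of $\tau=1$ (equivalently $R'(1)\le 0$). The paper instead fixes $u$ and parameterises $\alpha(t)=\tfrac12+\tfrac{a^2}{2(1-a^2)}t$, then writes $D$ as an explicit combination in the basis $\{(1-t),\,t(1-t),\,t^2\}$ whose three coefficients are \emph{manifestly} nonnegative polynomials in $u$ and $a$; this yields $D>0$ on $(0,1)$ in one stroke with no residual inequality to verify.

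Your route is sound---one can check that the leading coefficient of $R$ is strictly positive for $a>0$ and $\alpha$ in the given interval, and the inequality $R'(1)\le 0$ does hold (it falls out immediately if one differentiates the paper's three-term decomposition at $\tau=1$, since each term contributes a nonpositive amount). However, you have not actually proved this vertex bound, and doing so ``from scratch'' as a two-parameter inequality in $(a,\alpha)$ is a computation of essentially the same weight as the whole lemma. So the trade-off is: the paper's expansion in the $\alpha$-parameter is less conceptual but delivers a closed, self-certifying argument; your expansion in the $u^2$-variable is structurally cleaner but postpones the real work to an unproved inequality that is no easier than the original claim. If you pursue your line, the quickest way to close it is precisely to borrow the paper's parameterisation of $\alpha$ when bounding $R'(1)$.
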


\begin{proof}
Let us assume first that $f_d(0,\alpha,\beta)=f_s(0,\alpha)$. Then
$$
  \beta=\beta(\alpha)=2 (1+2 \alpha ) \sqrt{1-a^2}-(1+4 \alpha ) \sqrt{1-2 a^2}.
$$
If we define $\alpha(t)=\frac{a^2 }{2 \left(1-a^2\right)}t+\frac{1}{2}$,
then for $t\in[0,1]$ we have $\alpha(t)\in \left[\tfrac 1 2,\tfrac{1}{2-2a^2}\right]$
and it is eqnough to show that $h(t)=f_s(u,\alpha(t))-f_d(u,\alpha(t),\beta(\alpha(t))) > 0$
for all $t\in[0,1]$ and $u\in(0,1)$. But this follows directly from
\begin{align*}
h(t)&=u^2 \left(1-u^2\right) \left(2 \left(1-\sqrt{1-2 a^2} \sqrt{1-a^2}-\frac{3 a^2}{2}\right)
   \left(1-u^2\right) \left(2-u^2\right)+a^2\right) (1-t)\\
   &+\frac{u^2 \left(1-u^2\right)^2}{1-a^2} \left(\left(1-\sqrt{1-2 a^2} \sqrt{1-a^2}-\frac{3
   a^2}{2}\right) \left(4-2 u^2-a^2 u^2\right)+\frac{a^4 u^2}{2}\right) t(1-t)\\
   &+\frac{u^2 \left(1-u^2\right)^2}{4\left(1-a^2\right)^2} \left(\left(1-\sqrt{1-2 a^2}
   \sqrt{1-a^2}-\frac{3 a^2}{2}-\frac{a^4}{8}\right) \left(8-4 a^2\right)
   \left(2-a^2-u^2\right)+\frac{1}{2} a^6 \left(a^2+3 u^2\right)\right) t^2.
\end{align*}
If $f_d(0,\alpha,\beta)<f_s(0,\alpha)=f_d(0,\alpha,\beta(\alpha))$ then
by \Cref{lem:monotonicity_fs_fd} $\beta<\beta(\alpha)$ and we conclude that
$f_d(u,\alpha,\beta)<f_d(u,\alpha,\beta(\alpha))
\leq f_s(u,\alpha)$ for all $u\in(0,1)$.
\qed
\end{proof}
Note that
\begin{equation}\label{eq:fs_on_boundary_of_I}
    f_s\left(u,\frac{1}{2}\right)=-a^2(1-u^2)\leq 0
    \quad
    \text{and}
    \quad
    f_s\left(u,\frac{1}{2(1-a^2)}\right)=\frac{a^4(1-u^2)^2}{4(1-a^2)}\geq 0,
\end{equation}
thus the parameter $\alpha_e$ for which $f_s(\cdot,\alpha)$ equioscillates must be on the interval
$I=\left[\tfrac{1}{2},\tfrac{1}{2(1-a^2)}\right]$. But unfortunately, it may happen that
$\alpha^*\not\in I$. Thus we
define an enlarged interval $I_0=\left[\tfrac{1}{2},\tfrac{3}{2}\right]\supset I$
which will be used in further analysis.

\begin{corollary}\label{col:fs_equioscillates}
If $f_s(\cdot,\alpha_e)$ equioscillates, then for every $\beta>0$
    $$
    \max_{u\in[-1,1]}\left|f_d(u,\alpha_e,\beta)\right|>
    \max_{u\in[-1,1]}\left|f_s(u,\alpha_e)\right|.
    $$
\end{corollary}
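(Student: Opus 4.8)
The plan is to extract the precise meaning of the equioscillation of $f_s(\cdot,\alpha_e)$ and then to compare $f_d$ with $f_s$ at the two points this equioscillation singles out, via a case split on the sign of $f_d(0,\alpha_e,\beta)-f_s(0,\alpha_e)$.

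The first and most delicate step is to read off the shape of $f_s(\cdot,\alpha_e)$. Restricting $\bfm{p}$ to the edge $v=-1$ collapses $\bfm{p}(\cdot,-1)$ to a quadratic B\'ezier curve, so $f_s(\cdot,\alpha)$ is an even quartic in $u$, it satisfies $f_s(\pm1,\alpha)=0$ because the patch interpolates the edge vertices, and its coefficient of $u^4$ is nonnegative (a short computation gives it as $\tfrac{1-a^2}{4}(1-2\alpha)^2$). On $(0,1)$ its derivative therefore has the form $2u\,(B+2Cu^2)$ with $C\ge0$, so $f_s(\cdot,\alpha_e)$ has at most one interior critical point. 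I would then argue that equioscillation is incompatible with monotone behaviour: if the coefficient $B$ of $u^2$ were nonnegative, $f_s(\cdot,\alpha_e)$ would be monotone on $[0,1]$ and, together with $f_s(1,\alpha_e)=0$, could not change sign, contradicting equioscillation. Hence $B<0$, the center $u=0$ is a strict local maximum, there is a unique interior minimiser $u_0\in(0,1)$, and equioscillation forces
\[
  M:=\max_{u\in[-1,1]}\left|f_s(u,\alpha_e)\right|=f_s(0,\alpha_e)>0
  \qquad\text{and}\qquad
  f_s(u_0,\alpha_e)=-M.
\]
Pinning down that the center carries the positive extremum and the interior point the equal negative one is the crux of the whole argument.

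Next I would invoke the fact already recorded after \eqref{eq:fs_on_boundary_of_I}: since $f_s(\cdot,\tfrac12)\le0$ and $f_s\!\left(\cdot,\tfrac1{2(1-a^2)}\right)\ge0$, the equioscillation parameter satisfies $\alpha_e\in I=\left[\tfrac12,\tfrac1{2(1-a^2)}\right]=\left[\tfrac12,\tfrac1{2-2a^2}\right]$, which is precisely the hypothesis interval of \Cref{lem:f_s>f_d}.

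Finally I would fix $\beta>0$ and split into two cases. If $f_d(0,\alpha_e,\beta)>f_s(0,\alpha_e)=M$, then at once $\max_{u}\left|f_d(u,\alpha_e,\beta)\right|\ge f_d(0,\alpha_e,\beta)>M$ and the claim holds. Otherwise $f_d(0,\alpha_e,\beta)\le f_s(0,\alpha_e)$, so \Cref{lem:f_s>f_d} applies and yields $f_d(u,\alpha_e,\beta)<f_s(u,\alpha_e)$ for all $u\in(0,1)$; evaluating at the interior oscillation point $u_0$ gives $f_d(u_0,\alpha_e,\beta)<f_s(u_0,\alpha_e)=-M$, whence $\max_{u}\left|f_d(u,\alpha_e,\beta)\right|\ge\left|f_d(u_0,\alpha_e,\beta)\right|>M$. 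In both cases the strict inequality of the corollary follows. The only genuine obstacle is the structural first step; the rest is the short case split already powered by \Cref{lem:f_s>f_d} and the monotonicity of \Cref{lem:monotonicity_fs_fd}.
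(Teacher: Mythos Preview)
Your argument is correct and follows essentially the same route as the paper: both reduce to applying \Cref{lem:f_s>f_d} once the case $f_d(0,\alpha_e,\beta)>f_s(0,\alpha_e)$ is disposed of, and both use that the negative extremum of $f_s(\cdot,\alpha_e)$ lies in the open interval $(0,1)$. The only difference is cosmetic---the paper phrases the second case as a contradiction while you argue directly---and you supply the structural analysis of $f_s$ (the leading coefficient $\tfrac{1-a^2}{4}(1-2\alpha)^2$, the location of the extrema) that the paper merely asserts with ``Since $\max_{u\in[-1,1]}|f_s(u,\alpha_e)|=f_s(0,\alpha_e)$''.
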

\begin{proof}
  Let us suppose that
  $$
    \max_{u\in[-1,1]}\left|f_d(u,\alpha_e,\beta)\right|\leq
    \max_{u\in[-1,1]}\left|f_s(u,\alpha_e)\right|.
    $$
  Since $\max_{u\in[-1,1]}\left|f_s(u,\alpha_e)\right|=f_s(0,\alpha_e)$, we have
  $f_d(0,\alpha_e,\beta)\leq f_s(0,\alpha_e)$ and by \Cref{lem:f_s>f_d}
  $$
    f_d(u,\alpha_e,\beta) < f_s(u,\alpha_e),\quad u\in(0,1).
  $$
  Consequently $\min_{u\in[-1,1]}f_d(u,\alpha_e,\beta)<\min_{u\in[-1,1]} f_s(u,\alpha_e)$
  which is a contradiction.
  \qed
\end{proof}

\begin{lemma}\label{lem:mimmaxfd}
  If $(\alpha^*,\beta^*)$ is an optimal pair of parameters then
  $$
    \min_{u\in[-1,1]} f_{s,d}(u,\alpha^*,\beta^*)
    =\min_{u\in[-1,1]} f_{d}(u,\alpha^*,\beta^*)\quad \text{and} \quad
    \max_{u\in[-1,1]} f_{s,d}(u,\alpha^*,\beta^*)
    =\max_{u\in[-1,1]} f_{d}(u,\alpha^*,\beta^*).
  $$
\end{lemma}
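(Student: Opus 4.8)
The plan is to argue by contradiction, exploiting that $f_s$ does not depend on $\beta$ whereas, by \Cref{lem:monotonicity_fs_fd}, $f_d(u,\alpha,\cdot)$ is strictly increasing in $\beta$, and that both $f_s(u,\cdot)$ and $f_d(u,\cdot,\beta)$ are strictly increasing in $\alpha$ for every $u\in(-1,1)$. Write $P_s=\max_{u\in[-1,1]}f_s$, $m_s=\min_{u\in[-1,1]}f_s$ and, at the fixed $\beta^*$, $P_d=\max_{u\in[-1,1]}f_d$, $m_d=\min_{u\in[-1,1]}f_d$, so that $\max_{u}f_{s,d}=\max\{P_s,P_d\}$ and $\min_{u}f_{s,d}=\min\{m_s,m_d\}$. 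The lemma is then equivalent to the two inequalities $P_d\ge P_s$ and $m_d\le m_s$. Since $\mathcal P$ interpolates the four vertices of $\mathcal R$, $f$ vanishes at $u=\pm1$ on both the side and the diagonal, so $P_s,P_d\ge0\ge m_s,m_d$, and whenever the error is nonzero the extreme values are attained at interior critical points.

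First I would record that, as already observed after \Cref{lem:monotonicity_fs_fd}, optimality forces the error on $\angle$ to be balanced, $\max\{P_s,P_d\}=-\min\{m_s,m_d\}=:M>0$: were the positive part strictly larger, a small decrease of $\alpha$ would lower $f_s$ and $f_d$ at every interior point simultaneously, strictly decreasing $\max\{P_s,P_d\}$ while, by the strictness of the gap, keeping $-\min\{m_s,m_d\}$ below it, contradicting optimality of $(\alpha^*,\beta^*)$; the opposite imbalance is excluded by increasing $\alpha$. Suppose now that the conclusion fails, i.e.\ $P_d<P_s$ or $m_d>m_s$. If both $P_s=M$ and $m_s=-M$, then $f_s(\cdot,\alpha^*)$ equioscillates and \Cref{col:fs_equioscillates} yields $\max_{u}|f_d|>M$, contradicting $\max_{\angle}|f|=M$; this already rules out both extrema of $f_{s,d}$ lying on the side.

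In the remaining situation exactly one extreme value is pinned to the side and the other to the diagonal. Consider $P_s=M>P_d$ together with $m_d=-M$ and $m_s>-M$ (the mirror case $P_d=M>P_s$, $m_s=-M<m_d$ is treated identically, with the roles of $\max$/$\min$ and the signs of the perturbations reversed). I would then perturb both parameters at once: decrease $\alpha$ by $\varepsilon>0$, which pushes the interior maximum of $f_s$ strictly below $M$, and increase $\beta$ by a matched amount $\delta=O(\varepsilon)$ chosen so that $f_d$ does not drop below $-M$. This is admissible because the minimizer of $f_d$ lies in a compact subset of $(-1,1)$—it cannot sit at $u=\pm1$, where $f_d=0>-M$—on which $\partial f_d/\partial\beta$ is bounded below by a positive constant; hence a coordinated move keeps $f_d\ge-M$ throughout, while the slack quantities $P_d<M$ and $-m_s<M$ remain slack for $\varepsilon$ small. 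The resulting pair strictly lowers $\max_{\angle}|f|$ below $M$, contradicting optimality.

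The main obstacle is precisely the bookkeeping in this last perturbation: one must certify that no point of the diagonal profile is driven below $-M$—delicate near $u=\pm1$, where $\partial f_d/\partial\beta$ vanishes to order $(1-u^2)^2$ while $\partial f_d/\partial\alpha$ vanishes only to order $(1-u^2)$, so the two effects cannot be balanced uniformly—and that raising $\beta$ does not lift $P_d$ up to $M$. Both follow from isolating the active extremizer in a compact subinterval of $(-1,1)$ together with the strictness of the remaining inequalities, but they must be carried out with care. As an alternative route to the half-statement $m_d\le m_s$, whenever $\alpha^*\in[\tfrac12,\tfrac1{2-2a^2}]$ and $f_d(0)\le f_s(0)$ one may instead invoke \Cref{lem:f_s>f_d} to get $f_d<f_s$ on $(0,1)$ and evaluate at the interior minimizer of $f_s$, obtaining $m_d<m_s$ directly.
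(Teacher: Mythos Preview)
Your argument shares the paper's skeleton---split by which of $f_s,f_d$ realizes each extreme of $f_{s,d}$ and perturb to a contradiction---and the case where both extremes sit on $f_s$ is handled essentially the same way (you invoke \Cref{col:fs_equioscillates}, the paper invokes \Cref{lem:f_s>f_d} directly; your closing alternative via \Cref{lem:f_s>f_d} is precisely the paper's route for that half). The real difference is in the mixed cases: the paper perturbs only $\beta$, whereas you perturb $(\alpha,\beta)$ jointly. Your choice is in fact the one that works: once equioscillation holds (which you establish first, and which the paper records right after \Cref{lem:monotonicity_fs_fd}), in the situation $P_s=M>P_d$, $m_d=-M<m_s$ one has $\max_{\angle}|f|=P_s=-m_d=M$, and moving $\beta$ alone leaves $f_s$---hence $P_s=M$---untouched, so a one-parameter $\beta$-move does not by itself produce a strictly better approximant. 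Your coordinated move (lower $\alpha$ to pull $P_s$ below $M$, raise $\beta$ to keep $\min f_d$ from sinking) is what is actually needed; the endpoint worry you flag is harmless, since near $u=\pm1$ the slack $f_d\approx0\gg-M$ absorbs the $O(1-u^2)$ drop coming from the $\alpha$-move, while on a fixed compact neighbourhood of the minimizer the positive lower bound on $\partial_\beta f_d$ lets a choice $\delta=O(\varepsilon)$ suffice. Your outline is correct and, on this point, more careful than the paper's own argument.
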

\begin{proof}
  Since minimum and maximum of $f_{s,d}$ is attained either by $f_s$ or $f_d$,
  four possibilities have to be considered.\\
  If
  $$\min_{u\in[-1,1]} f_{s}(u,\alpha^*)
  <\min_{u\in[-1,1]} f_{d}(u,\alpha^*,\beta^*)\quad {\rm and} \quad
  \max_{u\in[-1,1]} f_{d}(u,\alpha^*,\beta^*)
  >\max_{u\in[-1,1]} f_{s}(u,\alpha^*)
  $$
  or
  $$\min_{u\in[-1,1]} f_{d}(u,\alpha^*,\beta^*)
  <\min_{u\in[-1,1]} f_{s}(u,\alpha^*)\quad {\rm and} \quad
  \max_{u\in[-1,1]} f_{s}(u,\alpha^*)
  >\max_{u\in[-1,1]} f_{d}(u,\alpha^*,\beta^*)
  $$
  then since $f_s$ does not depend on $\beta$ and $f_d$ is an increasing function of
  $\beta$, we can find $\beta^{**}<\beta^*$ or $\beta^{**}>\beta^*$ that
  $(\alpha^*,\beta^{**})$ implies a better approximant, respectively.\\
  If $\max_{u\in[-1,1]} f_{s}(u,\alpha^*)>
  \max_{u\in[-1,1]} f_{d}(u,\alpha^*,\beta^*)$
  then
  $$\max_{u\in[-1,1]}f_s(u,\alpha^*)=f_s(0,\alpha^*)>
  \max_{u\in[-1,1]} f_{d}(u,\alpha^*,\beta^*)
  $$
  and the case
  $\min_{u\in[-1,1]} f_{s}(u,\alpha^*)
  <\min_{u\in[-1,1]} f_{d}(u,\alpha^*,\beta^*)$
  is not possible due to the \Cref{lem:f_s>f_d}.
  This confirms the result of the lemma.\qed
\end{proof}

The previous lemma reveals that we are looking for an optimal error $f_d$, i.e., an equioscillating $f_d$. This is still a two-dimensional optimization problem.
But we shall prove that for the optimal pair of parameters $\alpha^*$ and
$\beta^*$ we must have $f_s(0,\alpha^*)=f_d(0,\alpha^*,\beta^*)$ which leads to
one-dimensional optimization. Namely, the quadratic function
$\beta \mapsto f_d(0,\alpha,\beta)-f_s(0,\alpha)$ has
positive leading coefficient and negative constant term, therefore
it possesses the unique positive zero
\begin{equation*}
\beta=\beta_0(\alpha)=2 (1+2 \alpha ) \sqrt{1-a^2}-(1+4 \alpha ) \sqrt{1-2 a^2}.
\end{equation*}
This leads to the following observation.
\begin{lemma} \label{lem:atpointzero}
  If $f_s(0,\alpha)=f_d(0,\alpha,\beta)=0$, then $f_s(u,\alpha)\leq 0$ and $f_d(u,\alpha,\beta)\leq 0$
  for all $u\in[-1,1]$.
\end{lemma}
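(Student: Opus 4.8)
The plan is to treat the two assertions separately: I would establish the bound on $f_s$ from an explicit closed form, and then deduce the bound on $f_d$ from the already-proven \Cref{lem:f_s>f_d}. Accordingly, I would first pin down $f_s$ completely. The formula for $\partial f_s/\partial\alpha$ recorded in \Cref{lem:monotonicity_fs_fd} shows that $f_s(u,\cdot)$ is quadratic in $\alpha$; integrating in $\alpha$ and fixing the $\alpha$-independent part by the boundary value $f_s(u,\tfrac12)=-a^2(1-u^2)$ from \eqref{eq:fs_on_boundary_of_I}, one finds that $f_s(u,\alpha)/(1-u^2)$ is affine in $u^2$, equal to $f_s(0,\alpha)$ at $u=0$ and with $u^2$-slope $(1-a^2)(\alpha-\alpha^2-\tfrac14)=-(1-a^2)(\alpha-\tfrac12)^2$. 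Hence
\begin{equation*}
  f_s(u,\alpha)=(1-u^2)\left[f_s(0,\alpha)-(1-a^2)\left(\alpha-\tfrac12\right)^2u^2\right].
\end{equation*}
Under the hypothesis $f_s(0,\alpha)=0$ the bracket collapses to $-(1-a^2)(\alpha-\tfrac12)^2u^2\le0$, and since $1-u^2\ge0$ on $[-1,1]$ this gives $f_s(u,\alpha)\le0$ throughout, which is the first claim.

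For the second claim I would begin by solving $f_s(0,\alpha)=0$. This is a quadratic in $\alpha$ whose unique positive root is $\alpha=\alpha_0=\tfrac{1}{\sqrt{1-a^2}}-\tfrac12$, so the hypothesis forces $\alpha=\alpha_0$. A short check then shows $\alpha_0\in\left[\tfrac12,\tfrac{1}{2-2a^2}\right]$: the left inequality is equivalent to $\sqrt{1-a^2}\le1$, while the right one, after clearing denominators and writing $t=\sqrt{1-a^2}$, reduces to $(1-t)^2\ge0$. Now the second hypothesis $f_d(0,\alpha_0,\beta)=0=f_s(0,\alpha_0)$ means precisely that $f_d(0,\alpha_0,\beta)\le f_s(0,\alpha_0)$, so \Cref{lem:f_s>f_d} applies and yields $f_d(u,\alpha_0,\beta)<f_s(u,\alpha_0)$ for all $u\in(0,1)$. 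Combined with the first claim $f_s(u,\alpha_0)\le0$, this gives $f_d(u,\alpha_0,\beta)<0$ on $(0,1)$. It then remains to note that $f_d(0,\alpha_0,\beta)=0$ by hypothesis, while at $u=\pm1$ the diagonal endpoints $\bfm{p}(\pm1,\pm1)$ coincide with the interpolated sphere vertices $\bfm{b}_{00},\bfm{b}_{22}$ from \eqref{eq:b_square}, so $f_d(\pm1,\alpha_0,\beta)=0$ as well; since $f_d(\cdot,\alpha_0,\beta)$ is even, the bound $f_d\le0$ extends to all of $[-1,1]$.

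The only genuinely computational step is extracting the closed form of $f_s$, and in particular recognizing that its $u^2$-slope is the perfect square $-(1-a^2)(\alpha-\tfrac12)^2$; this is exactly what makes the sign of $f_s$ transparent once $f_s(0,\alpha)=0$. Everything after that is bookkeeping: solving one quadratic for $\alpha_0$, verifying the interval membership so that \Cref{lem:f_s>f_d} is applicable, and then invoking that lemma together with the vertex interpolation and the evenness of $f_d$. I expect the interval-membership verification for $\alpha_0$ to be the one place where a small inequality manipulation is needed, but it collapses to the obvious $(1-\sqrt{1-a^2})^2\ge0$, so no genuine obstacle arises.
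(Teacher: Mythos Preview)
Your argument is correct, and it takes a genuinely different route from the paper's own proof. The paper proceeds by brute force: it solves the system $f_s(0,\alpha)=f_d(0,\alpha,\beta)=0$ to obtain both $\alpha=\tfrac{1}{\sqrt{1-a^2}}-\tfrac12$ and the explicit $\beta=\beta_0(\alpha)$, substitutes these into $f_s$ and $f_d$, and then exhibits each as a manifestly nonpositive polynomial in $u$ (the $f_d$ expression being a rather long sum of nonnegative terms). You, by contrast, first extract the structural identity
\[
f_s(u,\alpha)=(1-u^2)\bigl[f_s(0,\alpha)-(1-a^2)(\alpha-\tfrac12)^2u^2\bigr],
\]
valid for \emph{all} $\alpha$, which makes the sign of $f_s$ immediate once $f_s(0,\alpha)=0$; and for $f_d$ you avoid any new computation by checking $\alpha_0\in\bigl[\tfrac12,\tfrac{1}{2-2a^2}\bigr]$ and invoking \Cref{lem:f_s>f_d} to get $f_d<f_s\le0$ on $(0,1)$, closing with evenness and the vertex interpolation at $u=0,\pm1$. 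Your route is more economical and better exploits the logical order of the paper; the paper's direct computation has the minor advantage of also recording the explicit value of $\beta$ and an explicit factorisation of $f_d$, neither of which your argument needs.
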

\begin{proof}
 The solution of the system $f_s(0,\alpha)=0$ and
 $f_d(0,\alpha,\beta)=0$ is
 \begin{equation*}
     \alpha=\frac{1}{\sqrt{1-a^2}}-\frac{1}{2}
     \quad {\rm and}\quad
     \beta=\beta_0\left(\frac{1}{\sqrt{1-a^2}}-\frac{1}{2}\right)
     =4+\sqrt{1-2a^2}\left(1-\frac{4}{\sqrt{1-a^2}}\right).
 \end{equation*}
 Consequently
 \begin{equation*}
     f_s(u,\alpha)=-\left(1-\sqrt{1-a^2}\right)^2u^2(1-u^2)\leq 0
 \end{equation*}
 and
 \begin{multline*}
     f_d(u,\alpha,\beta)=
     -\frac{2 u^2 \left(1-u^2\right)}{1-a^2}
     \left(2 \left(1-\sqrt{1-a^2}-\frac{a^2}{2}\right) u^2 \left(u^2
   \left(1-a^2\right)+a^2 \left(1-u^2\right)\right)\right.\\
   +\left.
   \left(1-u^2\right)^2 \left(\sqrt{1-2
   a^2}-\sqrt{1-a^2}\right)^2
  +\left(1-\sqrt{1-2 a^2}-a^2\right) u^2 \left(1-u^2\right)
   \left(1-a^2\right)\right)\leq 0
 \end{multline*}
 for $u\in[-1,1]$.
 \qed
\end{proof}
If the assumption $f_s(0,\alpha)=f_d(0,\alpha,\beta)=0$ is
extended to $f_s(0,\alpha)=f_d(0,\alpha,\beta)\geq 0$, the following result can be confirmed.
\begin{lemma} \label{lem:speedincreasing}
Let $f_d(0,\alpha,\beta)=f_s(0,\alpha)\geq 0$. Then
\begin{equation*}
    \frac{df_d}{d\alpha}(0,\alpha,\beta_0(\alpha))\geq
    \frac{df_d}{d\alpha}(u,\alpha,\beta_0(\alpha))
\end{equation*}
for all $u\in[-1,1]$.
\end{lemma}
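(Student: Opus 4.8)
The plan is to establish the inequality by reducing it to a pointwise comparison that can be certified through explicit nonnegativity of a polynomial in $u^2$. The statement asserts that among all $u\in[-1,1]$, the $\alpha$-derivative of $f_d$ (evaluated along the constraint curve $\beta=\beta_0(\alpha)$ that enforces $f_d(0,\alpha,\beta)=f_s(0,\alpha)$) attains its maximum at $u=0$. Since we are differentiating along the curve $\alpha\mapsto(\alpha,\beta_0(\alpha))$, the first step is to compute the total derivative carefully via the chain rule: $\tfrac{d f_d}{d\alpha}(u,\alpha,\beta_0(\alpha)) = \tfrac{\partial f_d}{\partial\alpha}(u,\alpha,\beta_0(\alpha)) + \tfrac{\partial f_d}{\partial\beta}(u,\alpha,\beta_0(\alpha))\,\beta_0'(\alpha)$, using the explicit partial derivatives supplied in the proof of \Cref{lem:monotonicity_fs_fd} together with the closed form $\beta_0'(\alpha)=4\sqrt{1-a^2}-4\sqrt{1-2a^2}$.

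Next I would form the difference
\begin{equation*}
  D(u) = \frac{df_d}{d\alpha}(0,\alpha,\beta_0(\alpha)) - \frac{df_d}{d\alpha}(u,\alpha,\beta_0(\alpha))
\end{equation*}
and aim to show $D(u)\ge 0$ for all $u\in[-1,1]$. Because $f_d(\cdot,\alpha,\beta)$ is even in $u$ and every term in the partial derivatives involves $u$ only through even powers, $D$ is a polynomial in $u^2$ vanishing at $u=0$, so I expect a factorization $D(u)=u^2(1-u^2)\,Q(u^2)$ or similar, where $Q$ collects the remaining factors. The substitution $s=u^2\in[0,1]$ then turns the claim into showing $Q(s)\ge 0$ on $[0,1]$, which is a one-variable polynomial inequality with $a$ as a parameter in the admissible range $0<a\le\tfrac{\sqrt2}{2}$.

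The hypothesis $f_d(0,\alpha,\beta)=f_s(0,\alpha)\ge 0$ is essential and must be used to pin down the admissible range of $\alpha$: the condition $f_s(0,\alpha)\ge 0$ forces $\alpha\ge\alpha_0=\tfrac{2\sqrt{1-a^2}-1+a^2}{2(1-a^2)}$, and this lower bound on $\alpha$ is what I expect makes the coefficients of $Q$ nonnegative. So the strategy is to substitute $\beta=\beta_0(\alpha)$, express everything as a polynomial in $s=u^2$, $\alpha$, and $a$, and then verify that each coefficient (as a function of $\alpha$ and $a$) is nonnegative on the region $\alpha\ge\alpha_0$, $0<a\le\tfrac{\sqrt2}{2}$. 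Coefficient nonnegativity would follow from recognizing recurring nonnegative building blocks such as $1-\sqrt{1-a^2}$, $\sqrt{1-a^2}-\sqrt{1-2a^2}$, and $1-2a^2$, exactly the quantities that appear in \Cref{lem:atpointzero}.

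The main obstacle will be the coefficient-positivity verification. The surd terms $\sqrt{1-a^2}$ and $\sqrt{1-2a^2}$ do not combine cleanly, so individual coefficients of $Q(s)$ are mixed expressions in these two radicals and in $\alpha$, and their signs are not manifest. The delicate point is that the inequality is tight at $u=0$ by construction, so $D$ has a double root there and no slack to spare at the origin; any crude bounding that discards the constraint $\alpha\ge\alpha_0$ will likely fail. I would therefore try to rewrite each coefficient as a manifestly nonnegative combination by isolating the factor $(\alpha-\alpha_0)$ wherever possible and grouping the remaining radical terms into perfect-square or difference-of-radicals forms, reducing the whole problem to a finite checklist of sign conditions that hold uniformly on the parameter region.
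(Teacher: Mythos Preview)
Your plan is essentially the paper's approach: compute the total $\alpha$-derivative along $\beta=\beta_0(\alpha)$, form the difference $D(u)$, factor out the obvious $u^2$, and then certify nonnegativity of the remaining polynomial by exhibiting it as a sum of manifestly nonnegative terms in the radicals $\sqrt{1-a^2}$, $\sqrt{1-2a^2}$ and powers of $u^2$, $1-u^2$.

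Two minor points of divergence are worth noting. First, the factor $(1-u^2)$ that you anticipate does not appear: $D(1)=\tfrac{d}{d\alpha}f_s(0,\alpha)>0$, so only $u^2$ factors out. Second, and more useful, the paper exploits a structural simplification you may be overcomplicating: after pulling out $2u^2$, the remaining expression is \emph{affine} in $\alpha$, so it can be written as $k\cdot(\alpha-\tfrac12)+n$ with $k,n$ polynomials in $u^2,a^2$ and the two radicals. The paper then checks $k\ge 0$ and $n\ge 0$ directly. Your plan to isolate $(\alpha-\alpha_0)$ with $\alpha_0=\tfrac{1}{\sqrt{1-a^2}}-\tfrac12$ would also work (since $\alpha_0>\tfrac12$), but it drags an extra radical through every coefficient; the weaker bound $\alpha\ge\tfrac12$ already suffices and keeps the algebra cleaner.
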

\begin{proof}
  Recall that $\alpha\geq \frac 1 2$ and observe that
  \begin{equation*}
      \frac{df_d}{d\alpha}(0,\alpha,\beta_0(\alpha))
      -
    \frac{df_d}{d\alpha}(u,\alpha,\beta_0(\alpha))
    =2 u^2 \left(k(\alpha-\tfrac{1}{2})+n\right),
  \end{equation*}
  where
\begin{align*}
    k=&\left(\sqrt{1-2 a^2}-\sqrt{1-a^2}\right)^2 \left(1-u^2\right) \left(2 \left(1-u^2\right)^2+u^2+u^4\right)\\
    &+ 2 \left(\sqrt{1-2 a^2} \sqrt{1-a^2}\right) u^2 \left(1-u^4\right)+u^6
   \left(1-a^2\right)+a^2 u^2 \left(1-u^2\right)
   \left(3-u^2\right)\geq 0,\\
    n=&2 \left(\sqrt{1-2 a^2}-\sqrt{1-a^2}\right)^2 \left(2-5 u^2+5 u^4-\frac{3 u^6}{2}\right)\\
    &+\sqrt{1-2 a^2} \sqrt{1-a^2} u^2+a^2 \left(u^2-u^4+\frac{u^6}{2}\right)\geq 0.
\end{align*}
\qed
\end{proof}
\begin{corollary}
  If $f_s(0,\alpha)=f_d(0,\alpha,\beta)\ge 0$ then
  $\max f_s(\cdot,\alpha)=\max f_d(\cdot,\alpha,\beta)=f_s(0,\alpha)$.
\end{corollary}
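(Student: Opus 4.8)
The plan is to prove the two claimed equalities separately, using the hypothesis $f_s(0,\alpha)=f_d(0,\alpha,\beta)\ge 0$ only to fix the relevant parameter range and to link the two branches. First I would record what the hypothesis forces. Since $f_s(0,\cdot)$ is strictly increasing by \Cref{lem:monotonicity_fs_fd} and vanishes precisely at $\alpha_0=\tfrac{1}{\sqrt{1-a^2}}-\tfrac{1}{2}>\tfrac{1}{2}$ (the first coordinate of the solution in \Cref{lem:atpointzero}), the assumption $f_s(0,\alpha)\ge 0$ is equivalent to $\alpha\ge\alpha_0>\tfrac{1}{2}$. Moreover, as $\beta\mapsto f_d(0,\alpha,\beta)-f_s(0,\alpha)$ has positive leading coefficient and negative constant term, its only positive zero is $\beta_0(\alpha)$, so the condition $f_d(0,\alpha,\beta)=f_s(0,\alpha)$ with $\beta>0$ forces $\beta=\beta_0(\alpha)$. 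Everything then takes place on the one-parameter curve $\alpha\mapsto(\alpha,\beta_0(\alpha))$, $\alpha\ge\alpha_0$.

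For the side error I would show that $f_s$ attains its maximum at the midpoint. Because $f_s(\cdot,\alpha)$ is even and quartic, write $f_s(u,\alpha)=c_4(\alpha)u^4+c_2(\alpha)u^2+c_0(\alpha)$ and regard it as a quadratic in $t=u^2\in[0,1]$. Integrating the expression for $\partial f_s/\partial\alpha$ from \Cref{lem:monotonicity_fs_fd} (starting from $\alpha=\tfrac{1}{2}$, where the quartic term vanishes) gives $c_4(\alpha)=(1-a^2)(\alpha-\tfrac{1}{2})^2>0$ for $\alpha>\tfrac{1}{2}$, so this quadratic opens upward and its maximum on $[0,1]$ is attained at an endpoint. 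Since the patch interpolates the sphere at the corners, $f_s(1,\alpha)=0\le f_s(0,\alpha)$, and hence $\max_u f_s(u,\alpha)=f_s(0,\alpha)$.

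The diagonal error is the crux. Set $F(u,\alpha)=f_d(u,\alpha,\beta_0(\alpha))$ and $G(u,\alpha)=F(0,\alpha)-F(u,\alpha)$; the goal is $G\ge 0$. Differentiating along the curve, $\tfrac{d}{d\alpha}G(u,\alpha)=\tfrac{df_d}{d\alpha}(0,\alpha,\beta_0(\alpha))-\tfrac{df_d}{d\alpha}(u,\alpha,\beta_0(\alpha))\ge 0$ by \Cref{lem:speedincreasing} (applicable since $f_s(0,\alpha)\ge 0$ and $\alpha\ge\tfrac{1}{2}$ all along the curve), so $G(u,\cdot)$ is nondecreasing on $[\alpha_0,\infty)$. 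At the base point $\alpha=\alpha_0$ both $f_s(0,\cdot)$ and $f_d(0,\cdot,\beta_0(\cdot))$ vanish, and \Cref{lem:atpointzero} yields $F(u,\alpha_0)\le 0=F(0,\alpha_0)$, i.e. $G(u,\alpha_0)\ge 0$. Monotonicity propagates this to every $\alpha\ge\alpha_0$, giving $f_d(u,\alpha,\beta_0(\alpha))\le f_d(0,\alpha,\beta_0(\alpha))$ for all $u$, so $\max_u f_d(u,\alpha,\beta)=f_d(0,\alpha,\beta)=f_s(0,\alpha)$. Combining the two branches completes the proof.

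I expect the diagonal branch to be the main obstacle: whereas the shape of $f_s$ can be read off directly from its quadratic-in-$u^2$ form, $f_d$ is a genuine two-parameter quartic whose maximum cannot be located by inspection, so one must route through the comparison of growth rates in \Cref{lem:speedincreasing} together with the vanishing base case in \Cref{lem:atpointzero}. The only place where genuine estimation rather than bookkeeping is hidden is in those two lemmas---specifically the nonnegativity of the auxiliary coefficients $k$ and $n$---but since we may invoke them, the corollary itself reduces to the monotone-propagation argument above.
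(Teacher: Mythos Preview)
Your proof is correct and follows essentially the same route as the paper: identify $\beta=\beta_0(\alpha)$ and $\alpha\ge\alpha_0$, establish the base case $f_d(\cdot,\alpha_0,\beta_0(\alpha_0))\le 0$, and then propagate via the monotonicity in \Cref{lem:speedincreasing}. The only cosmetic differences are that you invoke \Cref{lem:atpointzero} for the base case whereas the paper uses \Cref{lem:f_s>f_d} together with the explicit formula for $f_s(\cdot,\alpha_0)$, and you supply an explicit argument for $\max f_s(\cdot,\alpha)=f_s(0,\alpha)$ that the paper leaves implicit.
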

\begin{proof}
Since
$$
f_s\left(u,\frac{1}{\sqrt{1-a^2}}-\frac 1 2\right)=-\left(1-\sqrt{1-a^2}\right)^2 u^2 \left(1-u^2\right)\le 0
$$
the inequality $f_s(0,\alpha)\ge 0$ implies
$\alpha\ge \alpha_0=\frac{1}{\sqrt{1-a^2}}-\frac 1 2$. Since
$\alpha_0\in[\frac{1}{2},\frac{1}{2-2a^2}]$, \Cref{lem:f_s>f_d} implies $f_d(u,\alpha_0,\beta_0(\alpha_0))\le 0$ for all $u\in[-1,1]$. The result now follows from
\Cref{lem:speedincreasing}. \qed
\end{proof}

\begin{lemma} \label{lem:greatergraph1}
Let $f_d(0,\alpha,\beta)>f_s(0,\alpha)\ge 0$. Let $\alpha'$ and $\beta'$ be such that $f_s(0,\alpha')=f_d(0,\alpha,\beta)=f_s(0,\alpha',\beta')$.
Then $f_d(u,\alpha,\beta)\le f_d(u,\alpha',\beta')\le f_d(0,\alpha,\beta)$ for all $u\in[-1,1]$.
\end{lemma}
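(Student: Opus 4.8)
The plan is to prove the two inequalities separately, and the first step is to read off what the defining conditions say about $(\alpha',\beta')$. Writing $c:=f_d(0,\alpha,\beta)$, the hypotheses read $f_s(0,\alpha')=c=f_d(0,\alpha',\beta')$, so $(\alpha',\beta')$ lies on the curve $\beta=\beta_0(\alpha)$ on which $f_s(0,\cdot)$ and $f_d(0,\cdot,\cdot)$ agree, i.e.\ $\beta'=\beta_0(\alpha')$. Since $c>f_s(0,\alpha)\ge 0$ we have $c>0$, so the hypothesis $f_s(0,\alpha')=f_d(0,\alpha',\beta')\ge 0$ of the corollary following \Cref{lem:speedincreasing} is met at $(\alpha',\beta')$. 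That corollary yields $\max_{u\in[-1,1]}f_d(u,\alpha',\beta')=f_s(0,\alpha')=c=f_d(0,\alpha,\beta)$, which is exactly the right inequality $f_d(u,\alpha',\beta')\le f_d(0,\alpha,\beta)$.

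For the left inequality I would argue by monotonicity along the level set $\{f_d(0,\cdot,\cdot)=c\}$. By \Cref{lem:monotonicity_fs_fd} both $\frac{\partial f_d}{\partial\alpha}(0,\cdot,\cdot)$ and $\frac{\partial f_d}{\partial\beta}(0,\cdot,\cdot)$ are positive, so $f_d(0,\tilde\alpha,\tilde\beta)=c$ defines a smooth decreasing branch $\tilde\beta=\tilde\beta(\tilde\alpha)$ with $\tilde\beta'=-\frac{\partial f_d}{\partial\alpha}(0,\tilde\alpha,\tilde\beta)\big/\frac{\partial f_d}{\partial\beta}(0,\tilde\alpha,\tilde\beta)$. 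Both $(\alpha,\beta)$ and $(\alpha',\beta')$ lie on this branch; since $f_s(0,\cdot)$ is increasing and $f_s(0,\alpha)<c=f_s(0,\alpha')$ we get $\alpha<\alpha'$, hence $\beta>\beta'$, and as $\tilde\beta$ is decreasing we have $\tilde\beta(\tilde\alpha)\ge\beta'>0$ throughout $[\alpha,\alpha']$, so the arc is admissible. Fixing $u$ and setting $\phi(\tilde\alpha):=f_d(u,\tilde\alpha,\tilde\beta(\tilde\alpha))$, the chain rule gives that $\phi'(\tilde\alpha)$ has the same sign as
\[
  G(u):=\frac{\partial f_d}{\partial\alpha}(u,\tilde\alpha,\tilde\beta)\,\frac{\partial f_d}{\partial\beta}(0,\tilde\alpha,\tilde\beta)-\frac{\partial f_d}{\partial\beta}(u,\tilde\alpha,\tilde\beta)\,\frac{\partial f_d}{\partial\alpha}(0,\tilde\alpha,\tilde\beta),
\]
since $\frac{\partial f_d}{\partial\beta}(0,\tilde\alpha,\tilde\beta)>0$. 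It therefore suffices to show $G(u)\ge 0$ for all $u\in[-1,1]$ on this arc; integrating $\phi'\ge 0$ from $\alpha$ to $\alpha'$ then gives $f_d(u,\alpha,\beta)=\phi(\alpha)\le\phi(\alpha')=f_d(u,\alpha',\beta')$.

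The computational core, and the main obstacle, is establishing $G(u)\ge 0$. Substituting the explicit expressions for $\frac{\partial f_d}{\partial\alpha}$ and $\frac{\partial f_d}{\partial\beta}$ from \Cref{lem:monotonicity_fs_fd} turns $G$ into an even polynomial in $u$ that vanishes at $u=0$ and $u=\pm1$, with coefficients depending on $a,\tilde\alpha,\tilde\beta$. I would try to certify its nonnegativity by the same device used in \Cref{lem:speedincreasing}: isolate the factor $u^2$ and rewrite the remaining sextic in $u^2$ as a combination of manifestly nonnegative building blocks $(1-u^2)$, $(1-u^4)$, $\dots$ with coefficients that stay nonnegative for $\tilde\alpha\ge\tfrac12$ and $\tilde\beta>0$, using $0<a\le\tfrac{\sqrt2}{2}$ so that $0\le\sqrt{1-2a^2}\le\sqrt{1-a^2}\le1$. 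The part of $G$ quadratic in $\tilde\beta$ is $\tfrac1{16}\sqrt{1-2a^2}\,\tilde\beta^2(1-u^4)(1-u^2)^2\ge0$, so the delicate point is the $\tilde\beta$-independent and linear-in-$\tilde\beta$ terms, whose sign is not apparent term by term; here the lower bound $\alpha\ge\alpha_0=\tfrac{1}{\sqrt{1-a^2}}-\tfrac12>\tfrac12$ forced by $f_s(0,\alpha)\ge0$ should supply the slack needed to absorb the negative contributions, exactly as the split $k(\alpha-\tfrac12)+n$ with $k,n\ge0$ did in \Cref{lem:speedincreasing}.
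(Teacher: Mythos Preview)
Your treatment of the right inequality $f_d(u,\alpha',\beta')\le f_d(0,\alpha,\beta)$ via the corollary after \Cref{lem:speedincreasing} is correct and is exactly what the paper leaves implicit.

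For the left inequality your route differs from the paper's and the argument is not completed. You move along the single level curve $\{f_d(0,\cdot,\cdot)=c\}$ from $(\alpha,\beta)$ to $(\alpha',\beta')$ and reduce everything to the sign of
\[
G(u)=\partial_\alpha f_d(u,\tilde\alpha,\tilde\beta)\,\partial_\beta f_d(0,\tilde\alpha,\tilde\beta)-\partial_\beta f_d(u,\tilde\alpha,\tilde\beta)\,\partial_\alpha f_d(0,\tilde\alpha,\tilde\beta),
\]
but you only \emph{sketch} a hoped-for nonnegativity certificate. The paper instead parametrises by the common value $\mu\in[f_s(0,\alpha),c]$ and compares two paths that start at the same point $(\alpha,\beta_0(\alpha))$: the fixed-$\alpha$ path $\mu\mapsto(\alpha,\beta_\alpha(\mu))$ with $f_d(0,\alpha,\beta_\alpha(\mu))=\mu$, ending at $(\alpha,\beta)$, and the $\beta_0$-path $\mu\mapsto(\alpha(\mu),\beta(\mu))$, ending at $(\alpha',\beta')$. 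Because $\alpha(\mu),\beta(\mu),\beta_\alpha(\mu)$ have explicit closed forms in $\sqrt{1+\mu}$, the difference of the $\mu$-derivatives of $f_d(u,\cdot,\cdot)$ along the two paths factors as
\[
\frac{u^2(1-u^2)}{(1-a^2)\sqrt{1+\mu}}\Bigl(A_1\bigl(\tfrac32-\alpha\bigr)+A_2\bigl(\sqrt{1+\mu}-1\bigr)+A_3\Bigr),
\]
with $A_1,A_2,A_3\ge 0$ explicit, so nonnegativity is immediate from $\alpha\le\tfrac32$ and $\mu\ge 0$.

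Two cautions about your plan for $G\ge 0$. First, on your level curve $\tilde\beta$ is only implicitly determined by $(\tilde\alpha,c)$, so in practice you must certify $G(u,\tilde\alpha,\tilde\beta)\ge 0$ over a genuine two-parameter region; the analogy with \Cref{lem:speedincreasing} is imperfect because there the substitution $\beta=\beta_0(\alpha)$ collapses everything to one free parameter. Second, note that the paper's clean decomposition hinges on the \emph{upper} bound $\alpha\le\tfrac32$ (through the factor $\tfrac32-\alpha$), not on the lower bound $\alpha\ge\alpha_0$ you invoke, so the slack you are counting on may not sit where you expect. Until an explicit nonnegative decomposition of $G$ is actually produced, the left inequality remains a gap in your argument.
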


\begin{proof}
For every $\mu\in [f_s(0,\alpha),f_d(0,\alpha,\beta)]$ let $\alpha(\mu)$ be the only positive solution of $f_s(0,\alpha(\mu))=\mu$, let $\beta(\mu)$ be the only positive solution of $f_d(0,\alpha(\mu),\beta(\mu))=\mu$, and $\beta_\alpha(\mu)$ be the only positive solution of $f_d(0,\alpha,\beta(\mu))=\mu$. Then
\begin{align*}
\alpha(\mu)&=\frac{2 \sqrt{1+\mu } \sqrt{1-a^2}-1+a^2}{2 \left(1-a^2\right)},\\
\beta(\mu)&=\frac{\left(\sqrt{1-2 a^2}+4 \sqrt{1+\mu }\right) \left(1-a^2\right)-4 \sqrt{1+\mu}\sqrt{1-2 a^2} \sqrt{1-a^2}}{1-a^2},\\
\beta_\alpha(\mu)&=4 \sqrt{1+\mu }-(1+4 \alpha ) \sqrt{1-2 a^2}.
\end{align*}
For $\mu_0=f_s(0,\alpha)$, we have $\alpha=\alpha(\mu_0)$, hence $f_d(u,\alpha,\beta_\alpha(\mu_0))= f_d(u,\alpha(\mu_0),\beta(\mu_0))$ for all $u\in[-1,1]$.
So it is enough to prove that $\tfrac{\partial f_d}{\partial \mu}(u,\alpha,\beta(\mu))\le \tfrac{\partial f_d}{\partial \mu}(u,\alpha(\mu),\beta(\mu))$.
Since
$$
\tfrac{\partial f_d}{\partial \mu}(u,\alpha(\mu),\beta(\mu))-\tfrac{\partial f_d}{\partial \mu}(u,\alpha,\beta(\mu))=
\frac{u^2 \left(1-u^2\right) \left(A_1 \left(\frac{3}{2}-\alpha \right)+A_2
   \left(\sqrt{1+\mu }-1\right)+A_3\right)}{\left(1-a^2\right) \sqrt{1+\mu }},
$$
where
\begin{align*}
A_1&=2 \sqrt{1-2 a^2} \left(1-a^2\right) \left(1-u^2\right)^2\ge 0,\\
A_2&=2 \left(1-u^2\right) \left(2 \left(1-u^2\right) \sqrt{1-2 a^2} \sqrt{1-a^2}+2 u^2\left(1-2 a^2\right)+a^2\right)\ge 0,\\
A_3&=2 \sqrt{1-2 a^2} \sqrt{1-a^2} 2 \left(1-u^2\right)^2
   \left(1-\sqrt{1-a^2}\right)\\
   &+2\left(\sqrt{1-a^2} \left(u^4 \left(1-2 a^2\right)+a^2u^2\right)+\left(1-u^2\right) \left(\left(1-2 a^2\right) 2 u^2+a^2\right)\right)\ge 0,
\end{align*}
the lemma follows.
\qed
\end{proof}

It is clear now that parameters $(\alpha^*,\beta^*)$ are such that
$|f|$ restricted on $\angle$ has the smallest possible minimum.
We will now prove that $f(\cdot,\cdot,\alpha^*,\beta^*)$ has local extrema only
on the diagonals and on the sides of the square $[-1,1]^2$.

The function $f(\cdot,\cdot,\alpha,\beta)$ is a symmetric polynomial. Let us reparameterize it by $e_1=u^2+v^2$
and $e_2=u^2v^2$. The map $(u,v)\mapsto (u^2+v^2,u^2v^2)=(e_1,e_2)$ is a bijection between $T=\{(u,v)\in\RR^2; 0<u<1, 0<v<u\}$ and $S=\{(e_1,e_2)\in\RR^2; 0<e_1<2,e_1-1<e_2<\tfrac 1 4 e_1^2,e_2>0\}$.
If we write $h(e_1,e_2,\alpha)=f(u,v,\alpha,\beta_0(\alpha))$,
the map $f(\cdot,\cdot,\alpha,\beta_0(\alpha))$ has local extrema on $T$ if and only if
$h(\cdot,\cdot,\alpha)$ has local extrema on $S$. But $h(\cdot,\cdot,\alpha)$ as a function on
$\RR^2$ has only one local extreme point $(x,y)$, namely
\begin{align*}
x&=\frac{4 (1+2 \alpha ) \left(\alpha  (1-2 \alpha ) \left(2-4 \alpha +a^2 (3+2 \alpha )\right)+\left(1+7 \alpha +8 \alpha ^2-20 \alpha ^3\right) \left(\sqrt{1-2
   a^2}-\sqrt{1-a^2}\right)^2\right)}{(1-2 \alpha )^2 (6 \alpha +1) \left((14
   \alpha +5) a^2+4 \left(\sqrt{1-2 a^2} \sqrt{1-a^2}-1\right) (2 \alpha +1)\right)},\\
y&=\frac{(1+2 \alpha )^2 \left(a^2 \left(1-4 \alpha ^2\right)+2 \left(5+2 \alpha -8 \alpha ^2\right) \left(\sqrt{1-2 a^2}-\sqrt{1-a^2}\right)^2\right)}{(1-2 \alpha )^2 (1+6\alpha ) \left(a^2 (2 \alpha -1)-2 (1+2 \alpha ) \left(\sqrt{1-2 a^2}-\sqrt{1-a^2}\right)^2\right)}.
\end{align*}
We shall see that for $\alpha=\alpha^*$ we have $y>1$,
which implies that $h(\cdot,\cdot,\alpha^*)$ has no local
extrema on $T$.

Let us first prove that $\tfrac{1-\tfrac 1 5a^4}{2(1-a^2)}=:\alpha_l<\alpha^*<\tfrac{1+\tfrac 1 5a^4}{2(1-a^2)}=:\alpha_r$. Since $f_s(0,\alpha_l)=\frac{a^4 \left(5+10 a^2+a^4\right)}{100 \left(1-a^2\right)}\ge 0$, the functions $f_s(\cdot,\cdot,\alpha)$ and $f_d(\cdot,\cdot,\alpha,\beta_0(\alpha))$ have the maximum at $u=0$ for all $\alpha\in [\alpha_l,\alpha_r]$.
We can write
\begin{align*}
f_d\left(\frac 3 4,\alpha_l,\beta_0(\alpha_l)\right)+
f_d\left(0,\alpha_l,\beta_0(\alpha_l)\right)=
-\frac{\eta}{6553600(1-a^2)^2(\zeta\sqrt{1-a^2}\sqrt{1-2a^2}+\xi)}\leq 0,
\end{align*}
where
\begin{align*}
\eta=a^4\sum_{i=0}^8 \eta_i a^{2i}(1-2a^2)^{8-i},\quad
\zeta=\sum_{i=0}^4 \zeta_i a^{2i}(1-2a^2)^{4-i},\quad
\xi=\sum_{i=0}^5 \xi_i a^{2i}(1-2a^2)^{5-i},
\end{align*}
such that $\eta_i, \zeta_i, \xi_i>0$ for all $i$. Hence the minimum of $\left|f_d\left(\cdot,\cdot,\alpha_l,\beta_0(\alpha_l)\right)\right|$ is larger then the maximum, therefore $\alpha^*>\alpha_l$.
On the other hand, we can write
\begin{align*}
f_d\left(u,\alpha_r,\beta_0(\alpha_r)\right)+
f_d\left(0,\alpha_r,\beta_0(\alpha_r)\right)=
\frac{\zeta\sqrt{1-a^2}\sqrt{1-2a^2}+\xi}{100(1-a^2)^2},
\end{align*}
where $\zeta=4 \left(10-5 a^2+a^4\right) u^2 \left(1-u^2\right)^2 \left(5 \left(2-a^2-u^2\right)+a^4 \left(1-u^2\right)\right)\ge 0$.
Since $\sqrt{1-a^2}\sqrt{1-2a^2}\ge 1-\tfrac 3 2 a^2-a^4$ for all $a\in [0,\tfrac{\sqrt 2}2]$, we have
\begin{align*}
f_d\left(u,\alpha_r,\beta_0(\alpha_r)\right)+
f_d\left(0,\alpha_r,\beta_0(\alpha_r)\right)=
\frac{\zeta\sqrt{1-a^2}\sqrt{1-2a^2}+\xi}{100(1-a^2)^2}\ge
\frac{\zeta(1-\tfrac 3 2 a^2-a^4)+\xi}{100(1-a^2)^2}.
\end{align*}
Since we can write
\begin{align*}
\zeta(1-\tfrac 3 2 a^2-a^4)+\xi=
\sum_{i=0}^4\left(\sum_{j=0}^{9-i} k_{i,j}u^{2j}(1-u^2)^{9-i-j} \right)a^{12-2i}(1-2a^2)^i,
\end{align*}
where all coefficients $k_{i,j}>0$, we have $f_d\left(u,\alpha_r,\beta_0(\alpha_r)\right)+
f_d\left(0,\alpha_r,\beta_0(\alpha_r)\right)\ge 0$ for all $u\in[-1,1]$. Hence the maximum of $\left|f_d\left(\cdot,\cdot,\alpha_r,\beta_0(\alpha_r)\right)\right|$ is larger then the minimum, therefore $\alpha^*<\alpha_r$.

Let us write $\alpha=(1-t)\alpha_l+t\alpha_r$ and prove that $y>1$ for all $t\in[0,1]$, in particular, $y>1$ for $\alpha=\alpha^*$. Let us write $y=\tfrac{y_n}{y_d}$. Let us first show that $y_d<0$. We can write
$$
-\frac{5(1-2a^2)}{(1-2\alpha)^2(1+6\alpha)}y_d=\xi+\zeta\sqrt{1-2a^2}\sqrt{1-a^2},
$$
where $\xi=40(1-2a^2)^3+160a^2(1-2a^2)^2+181a^4(1-2a^2)+49a^6+8a^4(1-2a^2)t+2a^6t$ and $\zeta=4(a^4(1-2t)-5(2-a^2)$. Since $\xi>0$ for all $t\in[0,1]$ it is enough to show that $\xi^2-\left(\zeta\sqrt{1-2a^2}\sqrt{1-a^2}\right)^2>0$. This follows from the equality
\begin{multline*}
\xi^2-\left(\zeta\sqrt{1-2a^2}\sqrt{1-a^2}\right)^2=
17 a^{12} (1-2 t)^2+480 a^6(1-a^2) (1-t)+320 t a^6+12 a^{10} t+\\
 25 a^8 (1-2a^2) t+105 a^8 (1-2 a^2) (1-t)+232 a^{10} (1-t)+32 t a^{10} (1-t)>0.
\end{multline*}
Hence to prove $y>1$, we have to show that $y_d-y_n>0$. We can write
\begin{align*}
y_d-y_n&=\xi\sqrt{1-a^2}\sqrt{1-2a^2}-\zeta=
\frac{(\xi\sqrt{1-a^2}\sqrt{1-2a^2}+k)^2-(\zeta+k)^2}{\xi\sqrt{1-a^2}\sqrt{1-2a^2}+\zeta+2k}\\
&\ge \frac{2k\xi\sqrt{1-a^2}\sqrt{1-2a^2}+\xi^2(1-a^2)(1-2a^2)-\zeta^2-2k\zeta}{\xi\sqrt{1-a^2}\sqrt{1-2a^2}+\zeta+2k}\\
&\ge \frac{2k\xi(1-\frac 3 2 a^2-\frac 1 8 a^4-\frac 7 4 a^6)+\xi^2(1-a^2)(1-2a^2)-\zeta^2-2k\zeta}{\xi\sqrt{1-a^2}\sqrt{1-2a^2}+\zeta+2k},
\end{align*}
where $k=\tfrac{103901}{256}$ and
\begin{align*}
\xi&=\frac{1}{625(1-a^2)^4}\left(\sum_{i=0}^{15}\sum_{j=0}^3 \xi_{i,j}a^{2i}(1-2a^2)^{15-i} t^j(1-t)+\sum_{i=0}^{15} \xi_{i,4}a^{2i}(1-2a^2)^{15-i} t^4\right),\\
\zeta&=\frac{1}{625(1-a^2)^4}\left(\frac{1}{256}\left(\sum_{i=0}^{15}\sum_{j=0}^3 \zeta_{i,j}a^{2i}(1-2a^2)^{15-i} t^j(1-t)+\sum_{i=0}^{15} \zeta_{i,4}a^{2i}(1-2a^2)^{15-i} t^4\right)+2k\right),\\
2k\xi&(1-\frac 3 2 a^2-\frac 1 8 a^4-\frac 7 4 a^6)+\xi^2(1-a^2)(1-2a^2)-\zeta^2-2k\zeta\\
&=
\sum_{i=0}^{15}\sum_{j=0}^8 \eta_{i,j}a^{2i+6}(1-2a^2)^{15-i} t^j(1-t)+\sum_{i=0}^{15} \eta_{i,9}a^{2i+6}(1-2a^2)^{15-i} t^9,
\end{align*}
and all coefficients $\xi_{i,j}$, $\zeta_{i,j}$ and $\eta_{i,j}$ are positive. Hence $y>1$ and the maximum and minimum of $f(\cdot,\alpha^*,\beta^*)$ are on the diagonals or on the sides of the square $[-1,1]^2$. Thus we have proved the following crucial theorem.

\begin{theorem}\label{thm:extrema_of_f}
  The maximum and the minimum of $f(\cdot,\cdot,\alpha,\beta)$ are on the set $\angle$.
\end{theorem}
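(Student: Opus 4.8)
The plan is to use the full symmetry of $f$ — invariance under $u\leftrightarrow v$ and under the sign changes $u\mapsto-u$, $v\mapsto-v$ — to reduce the global problem on $[-1,1]^2$ to the closed fundamental domain $\overline T$, where $T=\{(u,v):0<v<u<1\}$. Every value of $f$ on the square is already attained on $\overline T$, so its global maximum and minimum are attained there, and it suffices to locate them inside $\overline T$. First I would record that $\partial T$ splits into exactly three arcs: the diagonal $u=v$ (where $f=f_d$), the side $u=1$ (which $u\leftrightarrow v$ and evenness in $v$ identify with the slice carrying $f_s$), and the axis $v=0$. The first two arcs lie in $\angle$, so the whole task is to show that neither extremum is forced into the interior of $T$ nor onto the relative interior of the axis arc.

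To eliminate interior extrema I would invoke the symmetric coordinates $(e_1,e_2)=(u^2+v^2,u^2v^2)$, which map $T$ diffeomorphically onto $S$ and turn $f(\cdot,\cdot,\alpha,\beta_0(\alpha))$ into the polynomial $h(\cdot,\cdot,\alpha)$. Since a diffeomorphism preserves the position and type of local extrema, a local extremum of $f$ in the open triangle $T$ would yield a local extremum of $h$ in the open region $S$, hence it would have to coincide with the unique local extreme point $(x,y)$ of $h$ on $\RR^2$. But every point of $\overline S$ satisfies $e_2\le\tfrac14 e_1^2<1$ because $e_1<2$, whereas the computation preceding the theorem gives $y>1$ for all $\alpha\in[\alpha_l,\alpha_r]$, in particular for $\alpha=\alpha^*$. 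Thus $(x,y)\notin\overline S$, so $h$ has no local extremum in $S$ and $f$ has none in $T$. Consequently both global extrema of $f$ are attained on $\partial T$, and on the diagonal and side arcs they equal $\pm V$, where $V:=f_s(0,\alpha^*)=f_d(0,\alpha^*,\beta^*)$ is the common maximum attained at $u=0$; indeed, by \Cref{lem:mimmaxfd} and the subsequent one-parameter reduction $f_d(\cdot,\alpha^*,\beta^*)$ equioscillates with amplitude $V\ge0$, so $-V$ is realized on the diagonal and $|f_s|\le V$ on the side.

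It remains to dispatch the axis arc $v=0$, which is the one delicate point: there the map $(u,v)\mapsto(e_1,e_2)$ degenerates (its Jacobian is singular), so the uniqueness argument for $h$ does not apply, and the arc is a genuine part of $\partial T$ lying outside $\angle$. Here $u\mapsto f(u,0,\alpha^*,\beta^*)$ is an even quartic, hence a quadratic $P$ in $s=u^2$, and a short evaluation using $\beta^*=\beta_0(\alpha^*)$ shows that its endpoint values coincide, $f(0,0)=f(1,0)=V$; writing $P(s)=V+\lambda\,s(s-1)$, I would check that the leading coefficient satisfies $\lambda>0$, so that the maximum on the arc is the endpoint value $V$, and that $V-\tfrac{\lambda}{4}\ge -V$, so that the interior minimum does not undercut the diagonal minimum $-V$. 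Combining the three arcs then forces the global maximum $V$ and global minimum $-V$ of $f$ onto $\angle$.

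The main obstacle is unquestionably the inequality $y>1$: it reduces to the positivity of several large polynomials in $a^2$, $t$, $1-2a^2$ and the irrational factor $\sqrt{1-a^2}\sqrt{1-2a^2}$, and the preceding paragraph supplies explicit representations with uniformly positive coefficients (after the bound $\sqrt{1-a^2}\sqrt{1-2a^2}\ge 1-\tfrac32 a^2-a^4$) that certify it. This bookkeeping — producing sign-definite certificates for $y_d<0$ and $y_d-y_n>0$ — is the technically demanding heart of the argument, while the axis estimate above is a comparatively routine, but necessary, second step.
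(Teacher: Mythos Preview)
Your plan coincides with the paper's for the core step: pass to the symmetric coordinates $(e_1,e_2)$, locate the unique critical point $(x,y)$ of $h$ on $\RR^2$, and certify $y>1$ for $\alpha\in[\alpha_l,\alpha_r]$ via the positive-coefficient decompositions worked out just before the theorem. That is exactly the paper's argument, and you are right that this bookkeeping is the technical heart.

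Where you go beyond the paper is in isolating the axis arc $v=0$. The diffeomorphism $(u,v)\mapsto(e_1,e_2)$ is singular there, so the $y>1$ argument only excludes extrema in the \emph{open} triangle $T$; the boundary of $T$ carries the diagonal, the side \emph{and} the axis, and only the first two lie in the orbit of $\angle$. The paper's text jumps directly from ``$y>1$'' to ``the extrema lie on the diagonals or on the sides'', so the axis is not treated there; your extra step is a genuine completion rather than a digression. Your outline for it is sound: $f(\cdot,0,\alpha^*,\beta^*)$ is an even quartic with $f(0,0)=f(1,0)=V$ (the first by $\beta^*=\beta_0(\alpha^*)$, the second since $f(1,0)=f_s(0)$), hence $P(s)=V+\lambda\,s(s-1)$, and a direct computation gives $\lambda=\tfrac{1}{16}\bigl(\sqrt{1-2a^2}-\beta^*\bigr)^2>0$, forcing the axis maximum to the endpoints, which lie on $\angle$. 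The one piece you defer is the bound $V-\tfrac{\lambda}{4}\ge -V$; both sides scale like $a^4$, so this needs the same kind of constant-level certificate as the $y>1$ estimates, though of smaller scope. With that inequality secured your argument is complete and, on this point, more careful than the paper's own write-up.
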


It is quite clear now how to find the best approximant. The procedure is identical for
the simplified radial error and for the radial error by replacing
$f$ by $g$ so we consider only the first one.
\begin{itemize}
    \item[a)] Solve the equation $f_d(0,\alpha,\beta)=f_s(0,\alpha)$ on $\beta$
    to get the admissible $\beta_0(\alpha):=\beta(\alpha)$.
    \item[b)] Solve the system of nonlinear equations
    \begin{align*}
      \frac{\partial f_d}{\partial u}(u,\alpha,\beta_0(\alpha))&=0,\\
      f_d(0,\alpha,\beta_0(\alpha))+f_d(u,\alpha,\beta_0(\alpha))&=0
    \end{align*}
    on $u$ and $\alpha$ to get the admissible solution $u_0$ and $\alpha_0$.
    \item[c)] Return the best approximant $\bfm{p}$ constructed by $\alpha_0$
    and $\beta_0(\alpha_0)$.
\end{itemize}
Note that the above algorithm must be performed numerically since the equation
in b) can not be solved in a closed form in general. The Newton-Raphson method might be used
or any other appropriate algorithm for solving the system of nonlinear equations.

\section{Optimal $G^1$ approximation}\label{sec:G1_approximation}

A natural question arises whether the optimal approximant of the spherical square
constructed in the previous section can be used
as a $G^1$ tensor product quadratic B\'ezier spline approximant of the unit sphere.
In this case, either two or six patches
can be put together. We shall focus only on the spline of six patches since even in this case
the obtained $G^1$ surface is not a good approximation of the sphere.\\
If six patches are put together we have $a=\tfrac{\sqrt 3}{3}$. The $G^1$ condition implies
that the tangent plane of each of the three patches meeting at the common corner point
must coincide with the tangent plane of a sphere. Hence
$\tfrac{\partial\bfm{p}}{\partial u}(-1,-1)\times\tfrac{\partial\bfm{p}}{\partial v}(-1,-1)
=[\tfrac 2 3(1-2\alpha)\alpha,\tfrac 2 3(1-2\alpha)\alpha,\tfrac 4 3(1-\alpha)\alpha]^T$ must be parallel to
the unit normal of $\bfm{p}$ at $(-1,-1)$, i.e., parallel to
$[-\tfrac{\sqrt{3}}{3},-\tfrac{\sqrt{3}}{3},\tfrac{\sqrt{3}}{3}]^T$. This implies
$\alpha_{G}=\tfrac 3 4$.
For every $v\in[0,1]$ the tangent plane at points $\bfm{p}(1,v)$ and $R\bfm{p}(0,v)$,
where $R$ is the rotation matrix around $y$-axis for the angle $\tfrac\pi 2$, must coincide.
Due to the symmetry, the normal vector of the tangent plane must lie in the plane defined by $\bfm{b}_{20}$, $\bfm{b}_{22}$, and the origin.
Therefore $\tfrac{\partial\bfm{p}}{\partial u}(-1,v)\times\tfrac{\partial\bfm{p}}{\partial v}(-1,v)$ must be
perpendicular to $\bfm{p}(1,-1)\times \bfm{p}(1,1)$ which implies $\beta_G=\tfrac{7\sqrt{3}}6$.
So the $G^1$ spline approximant is unique and its radial error is
$\left|g(0,0,\alpha_G,\beta_G)\right|=\tfrac{5\sqrt{3}-8}{8}\approx 0.0825$ which is much
bigger than the radial error of the corresponding $G^0$ spline of six optimal approximants
constructed in the previous section (see \Cref{fig:semi_full_sphere} and \Cref{fig:G1_approximant}).

\section{Rectangular case}\label{sec:rectangle}
Another interesting issue is the optimal approximation of a spherical rectangle.
Recall that in the case of the approximation of a spherical square, the minimum
of the simplified radial error was never attained at the boundary of a patch.
The situation is quite different in the case of the approximation of a spherical
rectangle. The explanation will be supported by numerical evidence but no formal proof
will be provided.\\
Let the projection of the vertices of a spherical rectangle along the vector
$[0,0,1]^T$ be vertices of a planar rectangle with its larger edge $2a$ fixed and let
$2b<2a$ be it shorter edge. Similarly, as in the case of a spherical square \eqref{eq:p},
we define a tensor product quadratic
B\'ezier patch $\bfm{p}_r$ as
\begin{equation*}
    \bfm{p}_r(u,v,\alpha_1,\alpha_2,\beta)
    =\sum_{i=0}^2\sum_{j=0}^2 B_i^2(u)B_j^2(v)\bfm{c}_{ij},\quad u,v\in[-1,1],
\end{equation*}
where, analogously as in \eqref{eq:b_square},

\begin{alignat}{3}
    &\bfm{c}_{00}=(-a,-b,\sqrt{1-a^2-b^2}),\quad
    &&\bfm{c}_{10}=\alpha_1(\bfm{c}_{00}+\bfm{c}_{20}),\quad
    &&\bfm{c}_{20}=(a,-b,\sqrt{1-a^2-b^2}),\quad \nonumber\\
    &\bfm{c}_{01}=\alpha_2(\bfm{c}_{00}+\bfm{c}_{02}),\quad
    &&\bfm{c}_{11}=\beta\,(0,0,1),\quad
    &&\bfm{c}_{21}=\alpha_2(\bfm{c}_{20}+\bfm{c}_{22}), \quad
    \label{eq:b_rectangle}\\
    &\bfm{c}_{02}=(-a,b,\sqrt{1-a^2-b^2}),\quad
    &&\bfm{c}_{12}=\alpha_1(\bfm{c}_{02}+\bfm{c}_{22}),\quad
    &&\bfm{c}_{22}=(a,b,\sqrt{1-a^2-b^2}),\nonumber
\end{alignat}
with some $\alpha_1,\alpha_2,\beta>0$. As in \eqref{eq:simplified_radial},
we define $f_r(\cdot,\cdot,\alpha_1,\alpha_2,\beta)
=\left\|\bfm{p}_r(\cdot,\cdot,\alpha_1,\alpha_2,\beta)\right\|^2-1.$
We have seen in the analysis of the square case that the global minimum of $f$ can not appear on the boundary
of the square $[-1,1]^2$. We shall see in the following that this can actually happen in the rectangular case
for some particular ratio of sides $a$ and $b$. Once the global minimum is on the boundary,
numerical examples confirm that several optimal approximants of the spherical rectangle might exist. This
indicates that the rectangular case is a much more challenging problem.\\
Let us now try to find the condition on $b$,
which implies that the global minimum and maximum
of $f$ are on the boundary. If this is the case, we can assume that the restriction of
$f_r$ on $(u,-1)$, $u\in[-1,1]$, equioscillate.
This implies the parameter $\alpha_1$ and the point $u_0\in(-1,0)$
such that $\tfrac{\partial f_r}{\partial u} (u_0,-1,\alpha_1,\alpha_2,\beta)=0$
and $f_r(u_0,-1,\alpha_1,\alpha_2,\beta)$ is the global minimum of $f_r$
for any $\alpha_2$ and $\beta$.
A necessary condition for $f_r(u_0,-1,\alpha_1,\alpha_2,\beta)$
being the global minimum is
$\tfrac{\partial f_r}{\partial v}(u_0,-1,\alpha_1,\alpha_2,\beta)\geq 0$.
Suppose that the triple $(b_a,\alpha_a,\beta_a)$ is the solution of the system of nonlinear
equations
\begin{align*}
&\tfrac{\partial f_r}{\partial v} (u_0,-1,\alpha_1,\alpha_2,\beta)
=0,\\
&f_r(0,-1,\alpha_1,\alpha_2,\beta)
=f_r(0,0,\alpha_1,\alpha_2,\beta),\\
&f(0,-1,\alpha_1,\alpha_2,\beta)
=f(-1,0,\alpha_1,\alpha_2,\beta),\\
\end{align*}
with respect to $b$, $\alpha_2$ and $\beta$.
For every $b=b_a$, the candidate for the best approximant
of the spherical rectangle related to $a$ and $b_a$
is determined by the parameters $(\alpha_1,\alpha_a,\beta_a)$.
If $b<b_a$ all pairs of parameters $(\alpha_2,\beta)$ satisfying the inequalities
\begin{align}
\tfrac{\partial f}{\partial v} (u_0,-1,\alpha_1,\alpha_2,\beta)&\geq 0,\nonumber\\
f(0,-1,\alpha_1,\alpha_2,\beta)&\geq f(0,0,\alpha_1,\alpha_2,\beta),
\label{ineq:rectangular}\\
f(0,-1,\alpha_1,\alpha_2,\beta)&\geq f(-1,0,\alpha_1,\alpha_2,\beta),\nonumber
\end{align}
might induce the best approximant.
Thus, in this case, we might have an uncountable number of optimal
polynomial approximants of the spherical rectangle.
Indeed, let $\bfm{v}_i=(\alpha_{2,i},\beta_i)$,
$i=1,2,3$, be the intersections of two pairs of curves
in \eqref{ineq:rectangular} which are implicitly defined by
taking equalities instead of inequalities. Numerical computations
indicate that all pairs $(\alpha_2,\beta)$ from the triangle
defined by $\bfm{v}_i$, $i=1,2,3$, imply an optimal approximant.
Moreover, numerical computations also indicate that several
optimal approximant might exist if a pair $(a,b)$ is taken from
the grey region in \Cref{fig:ab_domain}.
On \Cref{fig:several_optimal}, radial errors of three optimal approximants of
the spherical rectangle with $a=0.75$ and $b=0.2$ are shown.

\begin{figure}[!htb]
  \centering
  \includegraphics[width=0.45\linewidth]{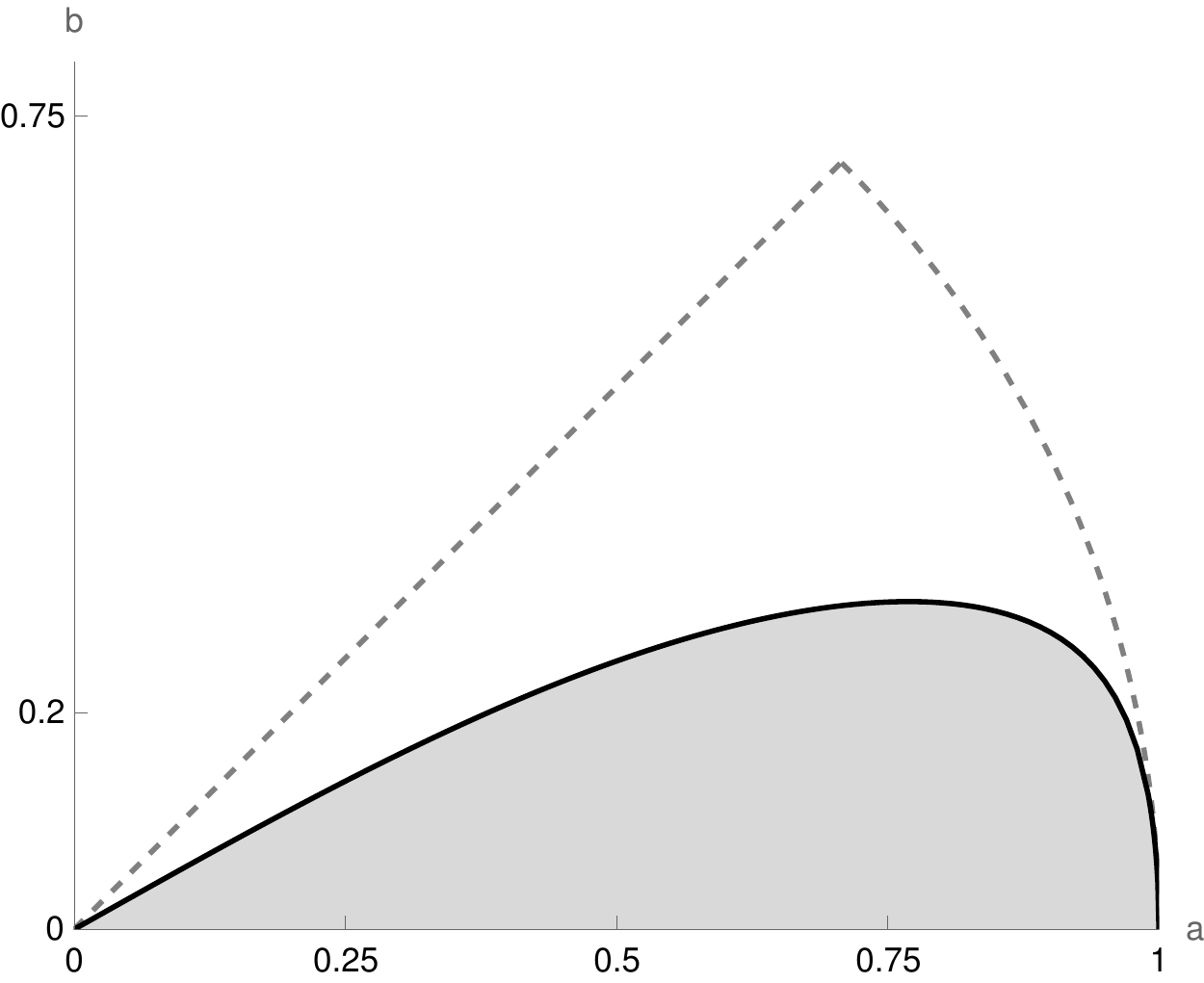}
  \caption{The domain (light grey) in $[0,1]^2$ for which pairs $(a,b)$ might imply several optimal approximants of the spherical rectangle. The grey dashed curve is the graph of the
  function $\min\{a,\sqrt{1-a^2}\}$ and together with the $a$-axis determine the admissible
  domain of pairs $(a,b).$}
  \label{fig:ab_domain}
\end{figure}

\begin{figure}[!htb]
  \centering
  \includegraphics[width=1\linewidth]{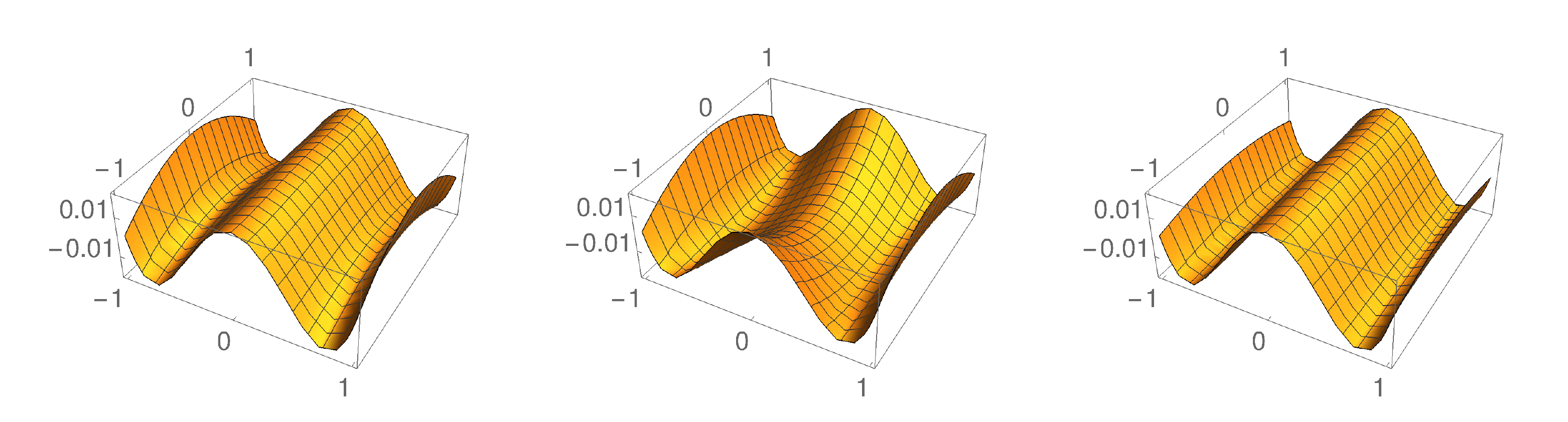}
  \caption{Graphs of $f$ for three optimal
  approximants of the spherical rectangle given by parameters $a=0.75$ and $b=0.2$.
  Here $\alpha_1=1.0277$ and the left graph corresponds to the pair
  $(\alpha_2,\beta)=(0.5313,1.4456)$,
  the middle one to
  $(\alpha_2,\beta)=(0.5313,1.3881)$,
   and the right one to
  $(\alpha_2,\beta)=(0.5239,1.4550)$.}
  \label{fig:several_optimal}
\end{figure}

\section{Numerical examples}\label{sec:numerical_examples}

Some numerical examples will be presented in this section,
confirming proven theoretical results. As the first example,
let us consider the optimal approximation of the
unit  sphere by the quadratic B\'ezier tensor product spline approximant. It is easy to
see that only the spline of two patches, each approximating the hemisphere, and the spline
of six patches, each approximating the one-sixth of the unit sphere, are possible.
Their plots and graphs of radial errors are in \Cref{fig:semi_full_sphere}.
\begin{figure}[!htb]
 \minipage{0.5\textwidth}
    \centering
    \includegraphics[width=0.8\linewidth]{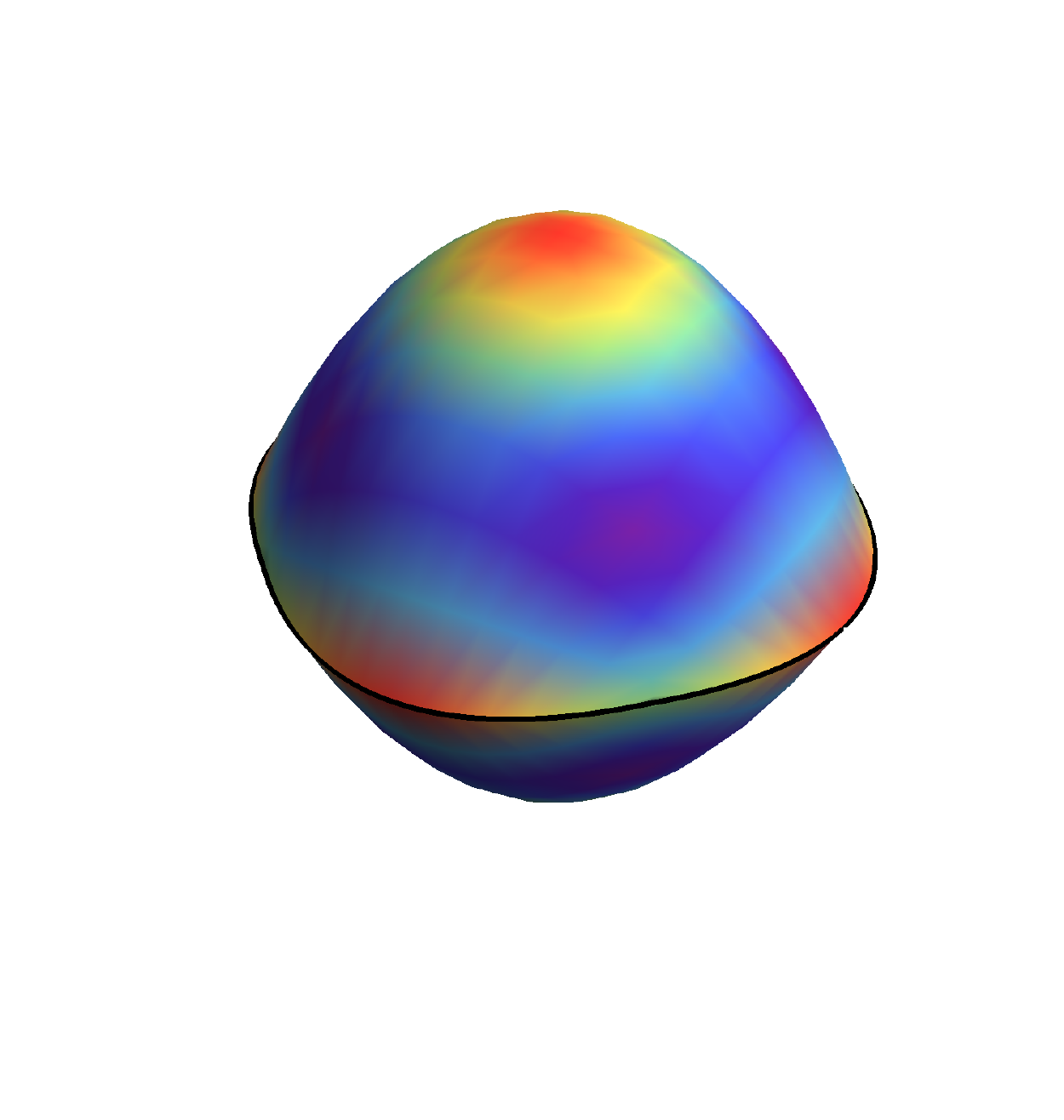}
  \endminipage\hfill
  \minipage{0.5\textwidth}
    \centering
    \includegraphics[width=0.8\linewidth]{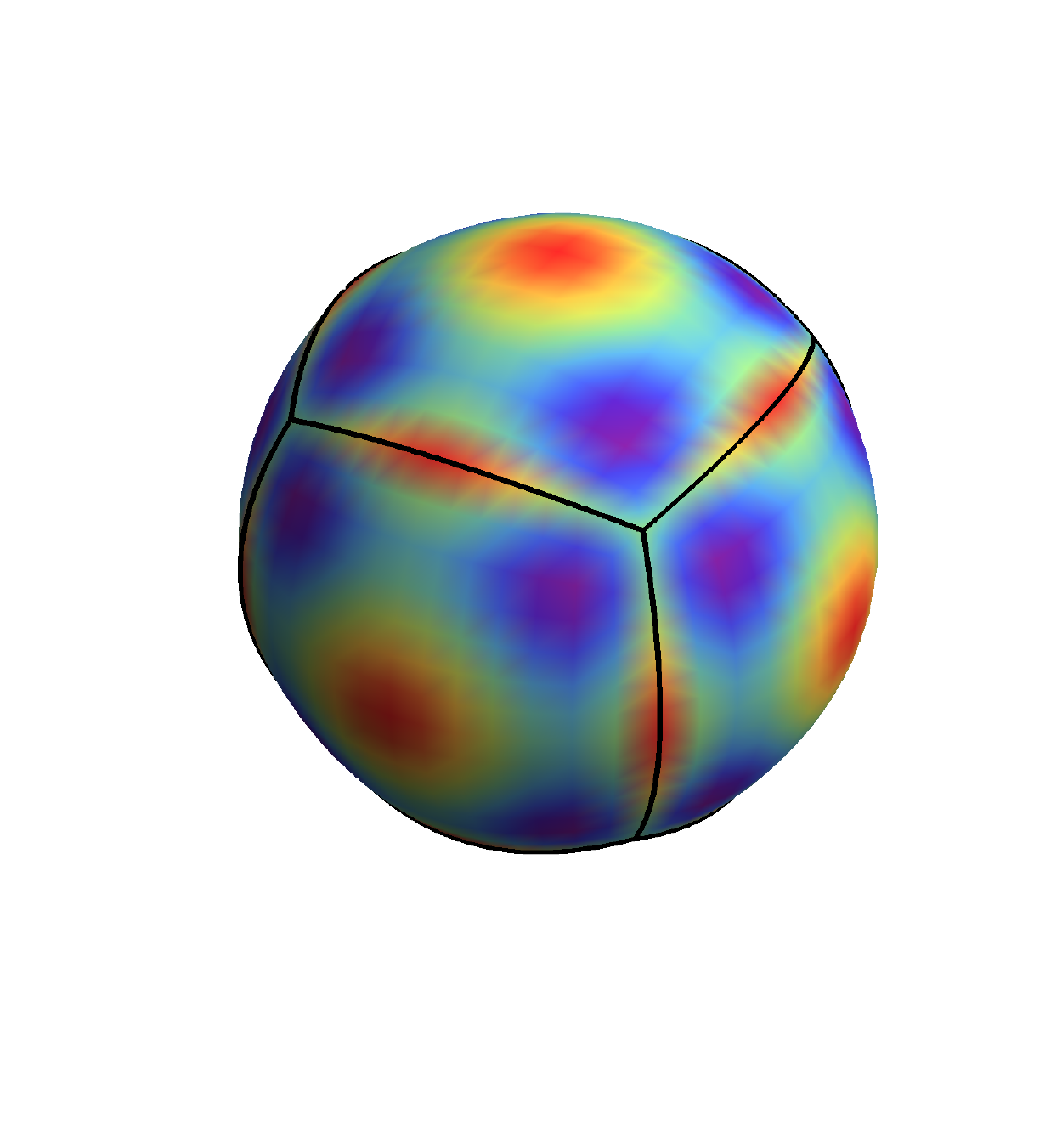}
  \endminipage\hfill
  \minipage{0.5\textwidth}
    \centering
    \includegraphics[width=0.8\linewidth]{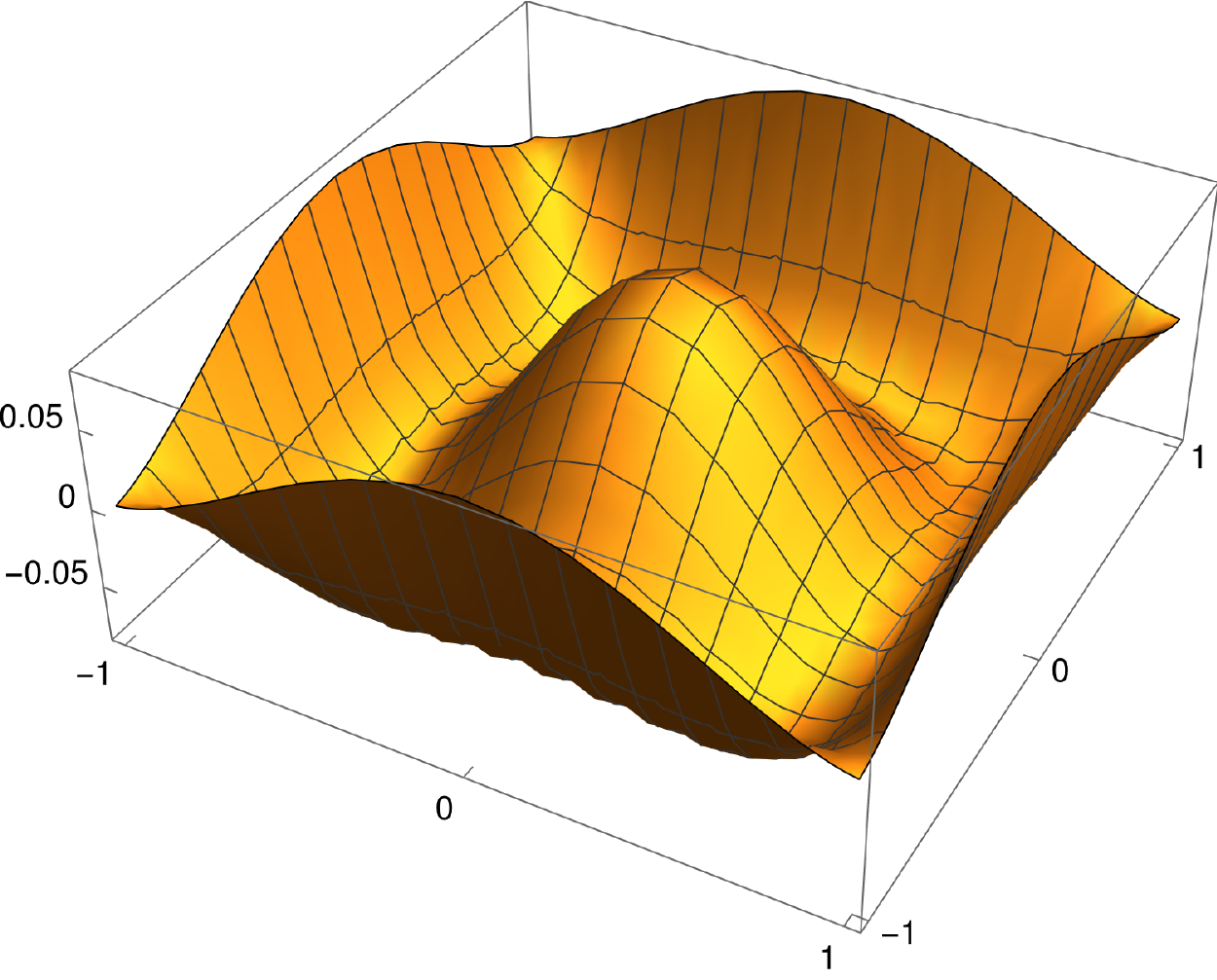}
  \endminipage\hfill
  \minipage{0.5\textwidth}
    \centering
    \includegraphics[width=0.8\linewidth]{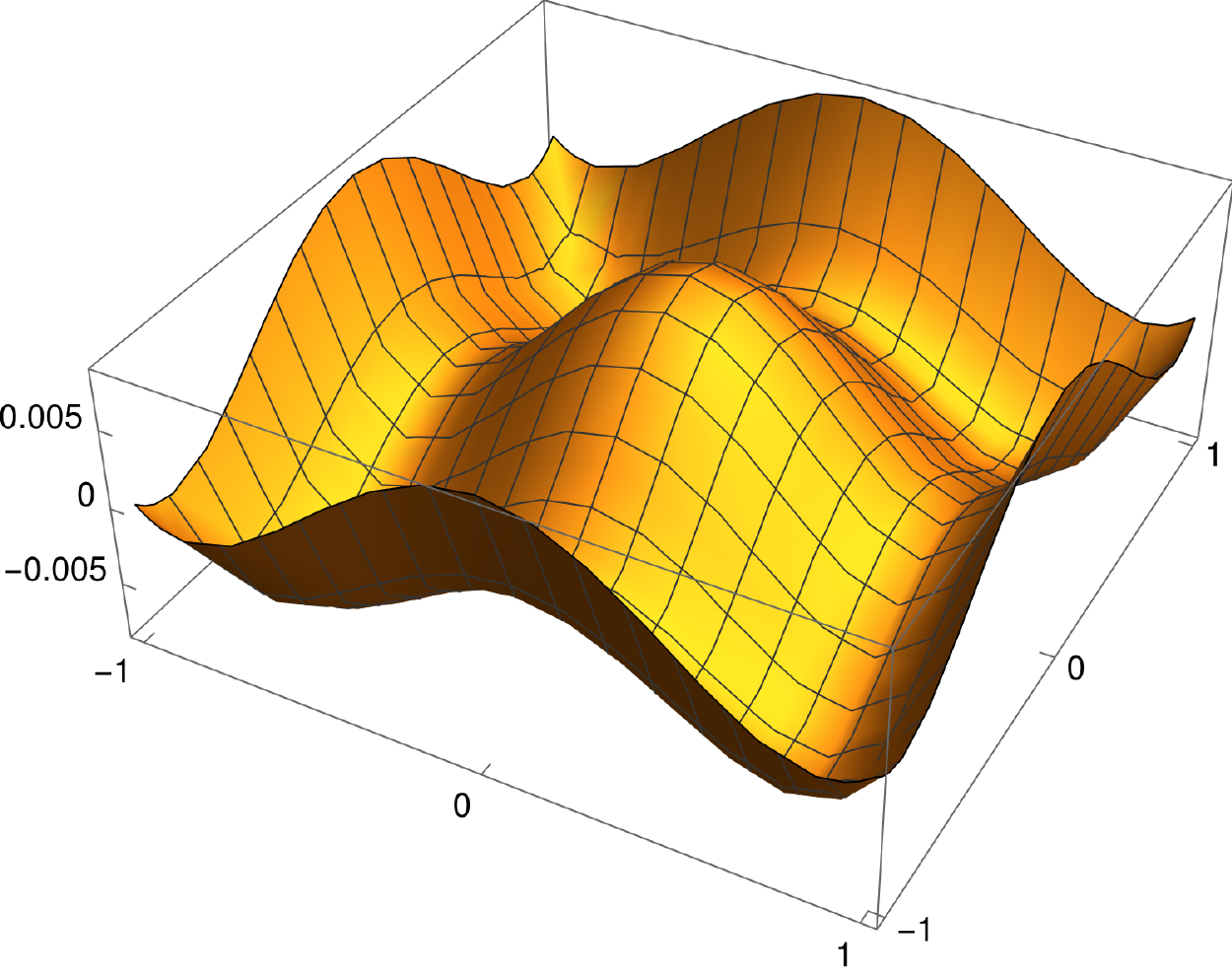}
  \endminipage\hfill
  \caption{Optimal approximation of the unit sphere by the spline of two tensor product
quadratic B\'{e}zier patches (upper left) and by the spline of six
tensor product quadratic B\'{e}zier patches (upper right) together with the graphs of the corresponding radial errors
for the single patch (bottom).
The colours of the approximants indicate the distance
from the sphere (red regions are out of the sphere).
}
  \label{fig:semi_full_sphere}
\end{figure}

In \Cref{tab:rate_of_convergence}, optimal parameters according to the
radial error, radial distances and the numerical convergence rates
are collected for a set of chosen parameters $a$. The numerical rate of convergence is estimated
as follows. Let $d_1$ and $d_2$ be two consecutive radial distances according to the
parameters $a_1$ and $a_2$. Assuming that the radial distance is of the form $d={\rm const}\,a^r$, we can estimate
$r\approx\log\left(d_1/d_2\right)/\log\left(a_1/a_2\right)$.
It is clearly seen that the distance converges to zero as the square of the area of the spherical
square.

\begin{table}[htb]
    \centering
    \begin{tabular}{|r|r|r|r|r|}
         \hline
         \multicolumn{1}{|c|}{$a$}  &
         \multicolumn{1}{|c|}{$(\alpha_r^*,\beta_r^*)$} &
         \multicolumn{1}{|c|}{$|g|$} &
         \multicolumn{1}{|c|}{$r$}\\ \hline
         $a_{max}$    & $(1.0306,4.3393)$  & $8.2331\times 10^{-2}$ & $-$\\ \hline
         $a_{max}/2$  & $(0.5698,1.1630)$  & $6.9966\times 10^{-4}$ & $6.87$\\ \hline
         $a_{max}/4$  & $(0.5160,1.0333)$  & $3.7421\times 10^{-5}$ & $4.23$\\ \hline
         $a_{max}/8$  & $(0.5039,1.0079)$  & $2.2596\times 10^{-6}$ & $4.05$\\ \hline
         $a_{max}/16$ & $(0.5010,1.0020)$  & $1.4005\times 10^{-7}$ & $4.01$\\ \hline
         $a_{max}/32$ & $(0.5002,1.0005)$  & $8.7349\times 10^{-9}$ & $4.00$\\ \hline
         $a_{max}/64$ & $(0.5001,1.0001)$  & $5.4565\times 10^{-10}$ & $4.00$\\ \hline

    \end{tabular}
    \caption{Optimal parameters $\alpha_r^*$ and $\beta_r^*$
    according to the radial error $g$, the corresponding radial distance $|g|$
    and the numerical rate of convergence $r$ for several parameters $a=a_{max}/2^i$,
    $i=0,1,\dots,6$, with $a_{max}=1/\sqrt{2}$.}
    \label{tab:rate_of_convergence}
\end{table}

The approximation of the unit sphere by the $G^1$ spline of six
tensor product quadratic B\'{e}zier patches constructed in \Cref{sec:G1_approximation}
is given in \Cref{fig:G1_approximant}. It is clearly seen that its radial error is much bigger
than the radial error of the optimal approximant in \Cref{fig:semi_full_sphere}. Moreover,
the $G^1$ approximant is one-sided, i.e., the whole approximant is out of the sphere. This suggests that
omitting the interpolation conditions at the vertices of the spherical square would imply better approximant
by pulling it towards the origin.

\begin{figure}[!htb]
 \minipage{0.5\textwidth}
    \centering
    \includegraphics[width=1\linewidth]{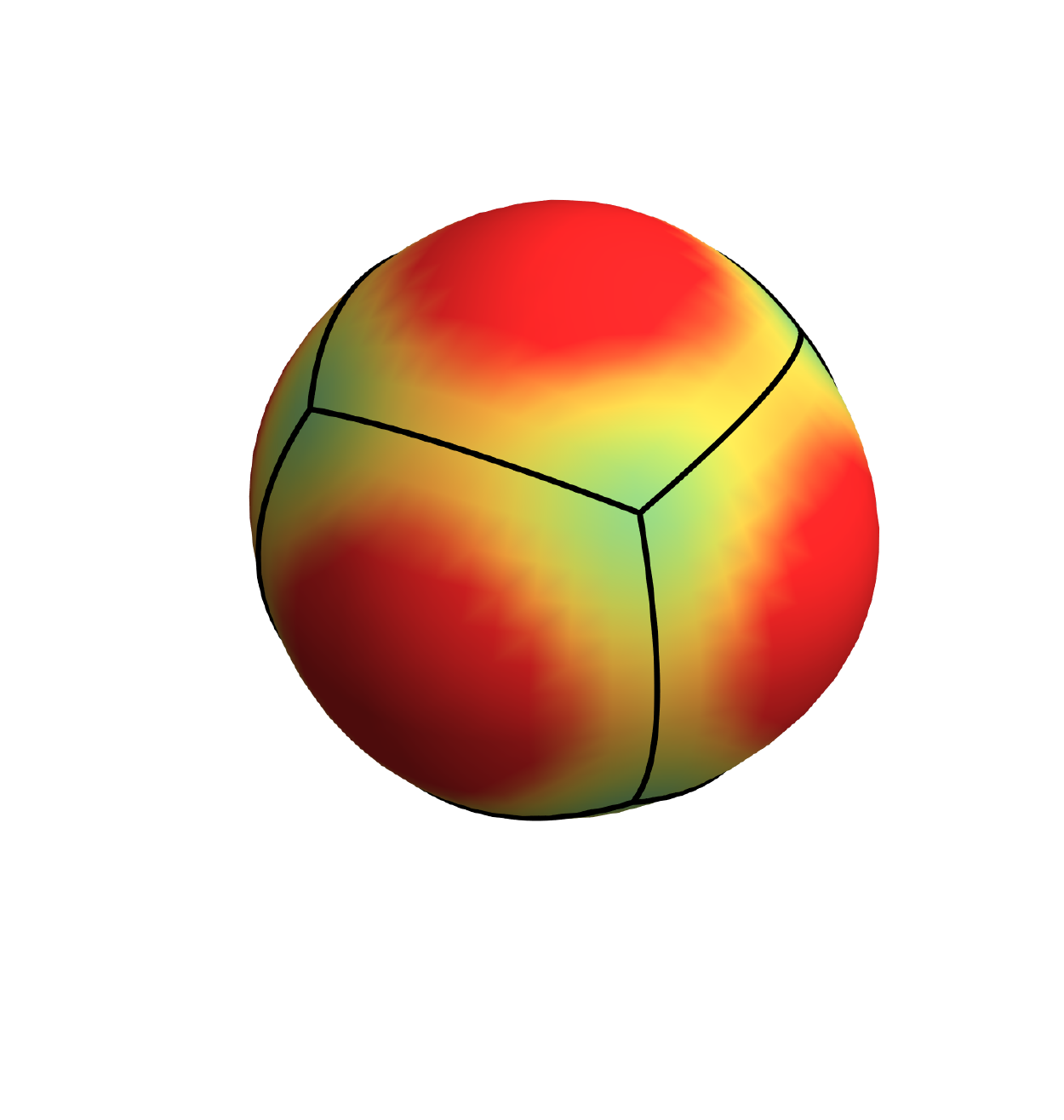}
  \endminipage\hfill
  \minipage{0.5\textwidth}
    \centering
    \includegraphics[width=1\linewidth]{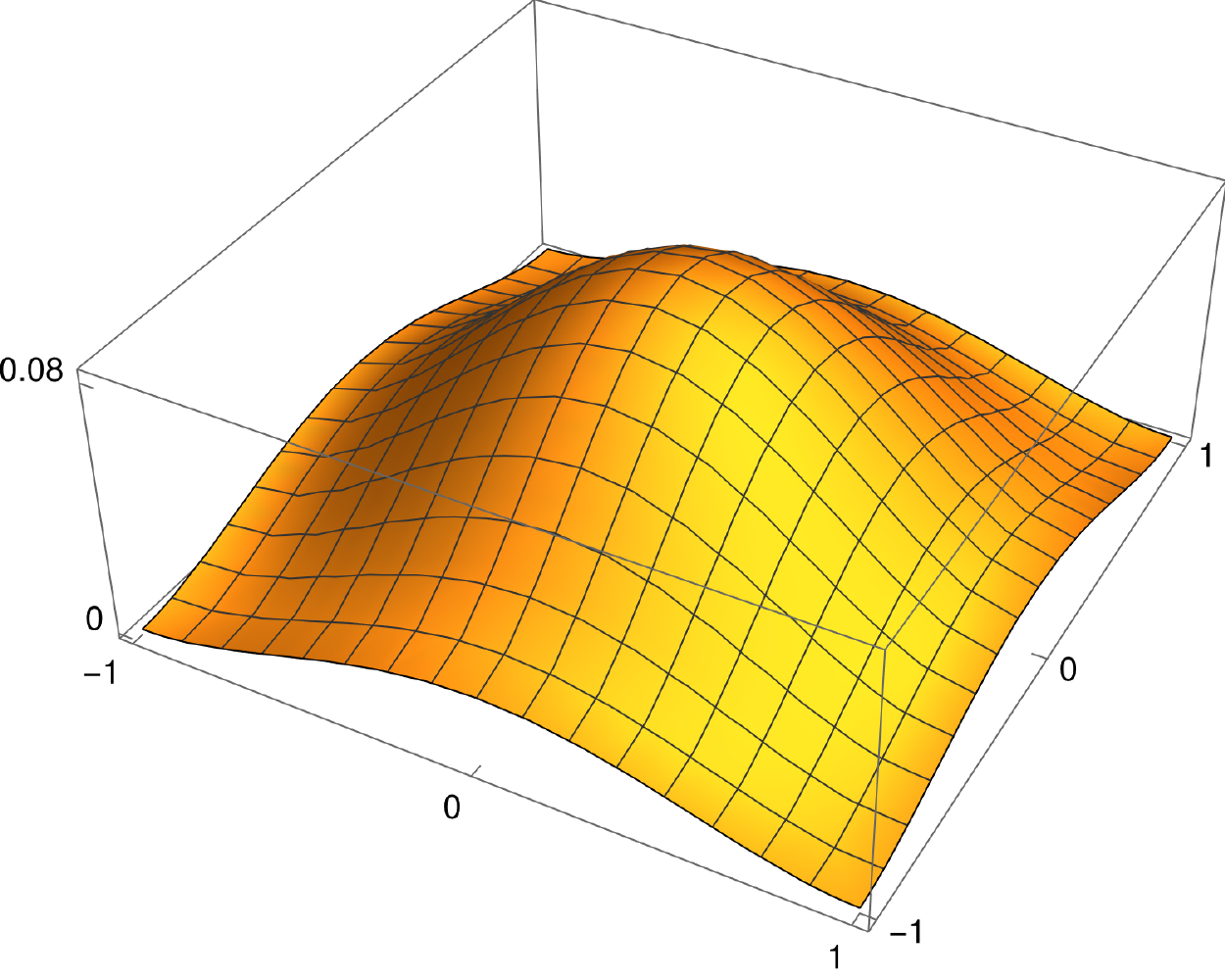}
  \endminipage\hfill
  \caption{Approximation of the unit sphere by the $G^1$ spline of six
tensor product quadratic B\'{e}zier patches (left) together with the graph of the
radial error for the single patch (right).
The colours of the spline approximant indicate its distance from the sphere. The red indicates a bigger distance.}
  \label{fig:G1_approximant}
\end{figure}

\section{Conclusion}\label{sec:conclusion}

Finding the optimal polynomial approximant of a given surface is
a challenging nonlinear optimization problem. There are only a few references
dealing with this problem available. In this paper, we have shown that the results obtained
in \cite{Eisele-1994-best-biquadratic} are not correct. As a counterexample, we found
a better approximant of the spherical square and provided an efficient algorithm for its
construction. It is natural to consider higher degree polynomial approximants of
spherical squares or at least approximants providing smoother polynomial spline patches
($G^k$ continuous tensor product spline patches). Both problems might be considered future work, but they lead to much more complicated nonlinear optimisation issues.
On the other hand, the approximation of spherical rectangles by tensor product quadratic
patches might be of some interest.
Preliminary results reveal that, in some cases,
it leads to several (infinitely many) optimal solutions. Thus the square
and the rectangular case deeply differ in their nature.

{\noindent \sl Acknowledgments.}
The first author was supported by the Slovenian Research Agency program
P1-0292 and the grant J1-4031. The second author acknowledges financial support from the Slovenian Research Agency program grant P1-0288 and the grants N1-0137 and
J1-3005.

\bibliographystyle{elsarticle-harv}

\end{document}